\documentclass[10pt]{amsart}
\usepackage{amsmath, amssymb}
\usepackage{graphicx}
\usepackage{subfigure}
 \usepackage{mathrsfs}

\usepackage{color}
\usepackage[english,american]{babel}

\usepackage{esint}
\usepackage[svgnames]{xcolor} 
\usepackage{dsfont}
\usepackage{url}
\usepackage[utf8]{inputenc}
\usepackage[T1]{fontenc}
\usepackage{lmodern}
\usepackage{babel}
\usepackage{mathtools}  
\usepackage{amssymb}
\usepackage{lipsum}
\usepackage{mathrsfs}
\usepackage{color}

\usepackage{epsf,graphicx,epsfig,cite,latexsym}
\usepackage[export]{adjustbox}
\usepackage{mathtools}
\usepackage{latexsym, amsmath, amssymb, a4, epsfig}
\usepackage[T1]{fontenc}

\usepackage{listings}

\usepackage{algorithm}
\usepackage{algpseudocode}

 \usepackage{mathrsfs}
\newcommand{\no}[1]{#1}
\renewcommand{\no}[1]{}
\no{\usepackage{times}\usepackage[subscriptcorrection, slantedGreek, nofontinfo]{mtpro}
\renewcommand{\Delta}{\upDelta}}
\usepackage{color}



\numberwithin{algorithm}{section}
\numberwithin{figure}{section}

\newtheorem{lemma}{Lemma}[section]
\newtheorem{remark}{Remark}[section]

\newtheorem{proposition}{Proposition}[section]
\newtheorem{theorem}{Theorem}[section]

 \setlength{\marginparwidth}{0.6in}

\setlength{\oddsidemargin}{0.0in}
\setlength{\evensidemargin}{0.0in}
\setlength{\textwidth}{6.5in}
\setlength{\topmargin}{0.0in}
\setlength{\textheight}{8.5in}

\date{\today}



\newcommand{\C}{\mathbb{C}}



\let\conjugatet\overline 
\def\cc{\conjugatet}
\def\w{\widetilde }
\def\ep{\varepsilon}

\def\t{\tau}
\def\b{\bigg}
\newcommand{\p}{\partial}
\def\no{\nonumber}
\newcommand{\norm}[1]{\|#1\|}
\newcommand{\abs}[1]{\left|#1\right|}




\def\qed{\ifmmode\hbox{\hfill\sqb}\else{\ifhmode\unskip\fi%
\nobreak\hfil
\penalty50\hskip1em\null\nobreak\hfil\sqb
\parfillskip=0pt\finalhyphendemerits=0\endgraf}\fi}
\newcommand{\intt}{\int^t_0}

\newcommand{\paxi}{\partial_{\xi}}

\newcommand{\ds}{\displaystyle}

\newcommand{\be}{\begin{equation}}
\newcommand{\ee}{\end{equation}}
\newcommand{\ba}{\begin{array}}
\newcommand{\ea}{\end{array}}
\newcommand{\bea}{\begin{eqnarray*}}
\newcommand{\eea}{\end{eqnarray*}}
\newcommand{\bean}{\begin{eqnarray}}
\newcommand{\eean}{\end{eqnarray}}
\def\im{\textrm{Im}}
\def\re{\textrm{Re}}

\def\sqw{\hbox{\rlap{\leavevmode\raise.3ex\hbox{$\sqcap$}}$%
\sqcup$}}
\def\sqb{\hbox{\hskip5pt\vrule width4pt height6pt depth1.5pt%
\hskip1pt}}



\begin{document}

\title[]{{ Identification of  a boundary influx condition in a one-phase Stefan problem }}

\author{Chifaa Ghanmi}
\address{Chifaa Ghanmi, Facult\'e des Sciences de Tunis, Universit\'e Tunis El Manar, Tunisia}
\email{Chifaa.Ghanmi@fst.utm.tn}

\author{Saloua Mani-Aouadi}
\address{Saloua Mani Aouadi,  Facult\'e des Sciences de Tunis, Universit\'e Tunis El Manar, Tunisia}
\email{Saloua.Mani@fst.utm.tn}

\author{Faouzi Triki}

\address{Faouzi Triki,  Laboratoire Jean Kuntzmann,  UMR CNRS 5224, 
Universit\'e  Grenoble-Alpes, 700 Avenue Centrale,
38401 Saint-Martin-d'H\`eres, France}

\email{faouzi.triki@univ.grenoble-alpes.fr}

\thanks{
The work of FT is supported in part by the
 grant ANR-17-CE40-0029 of the French National Research Agency ANR (project MultiOnde).}

\begin{abstract}
We consider a one-dimensional one-phase  inverse Stefan problem for the heat equation. 
It consists in recovering a boundary influx condition  from the knowledge  of 
 the position of the moving  front, and the initial state. We derived  a logarithmic  stability estimate  that shows that the inversion
  is ill-posed. The proof is based on  integral equations and  unique continuation for holomorphic functions.
   We also  proposed a direct algorithm with a regularization term to 
  solve the nonlinear  inverse problem.  Several numerical tests using noisy data are provided  with relative errors 
  analysis.
   \end{abstract}

\maketitle

\section{Introduction}
Stefan problem is a specific type of free boundary problems in partial differential equations related to heat diffusion.  It aims to 
describe the temperature distribution in a homogeneous  medium  undergoing a phase change, for example water passing to ice. 
Stefan problems model finds application in many  engineering  settings in which there is melting or  freezing  causing a boundary 
change in time. Examples include melting of ice, recrystallization of metals,  tumor growth,  freezing of liquids, etc \cite{Ru, Go}. \\

The Stefan problem for the heat equation  consists in  determining the temperature and 
 location of the melting front delimiting the different phases when the  initial and boundary conditions are given. Conversely, the inverse Stefan problem is to recover boundary conditions, and/or initial condition  from   measurement of the moving boundary position. The Stefan problem as a mathematical model  has been studied in  the literature for many decades, and has attracted the attention of many mathematicians and physicists (see for instance \cite{Ru, Me, Vi} and references therein). \\
 
 The direct Stefan problem is known to be well-posed, i.e., it possesses a unique solution which depends continuously on the data, provided that the initial state and the source functions have the correct signs \cite{Fr, LSU}. If the sign requirements are not fulfilled,  the direct  Stefan problem may not have a global solution or may  be an ill-posed problem \cite{DF, PSS}.  In contrast with the direct Stefan problem only few theoretical results are available for the related inverse problem.  Indeed most published materials have considered the  numerical reconstruction of the  temperature or heat flux  on the boundary \cite{RK, Jo1, Jo2, Co, johansson2011method, Wr}. Solving numerically  both direct and inverse Stefan problems can be a difficult task because of the free boundary,   the nonlinearity and  the  ill-posedness  in the sense that these problems  may not possess a solution, or that in case there exist  solutions they  may not depend continuously on the given data  \cite{cannon1967stability, WY, RK}.    \\

In this paper we are interested in solving the so-called one-dimensional one-phase inverse Stefan problem which is {\it to find  the heat influx at the boundary from the knowledge of the moving free boundary.} The one-dimensional Stefan problem has been extensively studied by many authors \cite{Ru, Fr2, cannon1984one, WY, johansson2011method, Sa}.  Assuming  that  the temperature of the solid is constant, and  the heat conduction in the melted portion is governed by the heat equation, leads to a  simple one-phase free boundary value problem. If in addition the heat influx stays positive, which means that heat is continuously entering the domain, only one phase remains,  and  the direct Stefan problem admits  a unique global smooth solution. The inverse problem can be recasted  as  a non standard Cauchy problem. The uniqueness of the inversion has been established by several  authors under different smoothness assumptions on the boundary influx and  the initial state \cite{Ky, cannon1967cauchy, CH, Fr2}.  The novelty in this paper is a logarithmic stability  estimate of the inversion, and the numerical treatment  of the problem  based on the resolution of  regularized linear integral equations. \\

This paper is composed of  five sections.  Section 2 provides an introduction to the direct problem.  We first recall  known useful existence, uniqueness, stability and regularity properties for solutions of the classical one  Stefan problem in Theorems~\ref{thm 1.19} and \ref{stab}. We 
set the inverse Stefan problem in section 3. The main stability estimate is given in Theorem \ref{main theorem}. We detailed the proof of the 
principal  results in  section  4. Finally, section 5 is devoted to the numerical resolution of  both the direct and inverse Stefan problems. Several numerical tests are provided  with relative errors analysis.
 
\section{The direct problem}

We next introduce the one dimension one-phase Stefan problem. Let $b>0$  and $T>0$ be  two fixed constants.  
For any positive function $s\in C([0,T])$ satisfying  $s(0)=b$, we define the open set  $Q_{s,T} \subset \mathbb R_+\times(0, T)$,
by
\begin{equation}
 Q_{s,T}=\{ (x,t) |\ \  0<x<s(t), \quad 0<t<T \}.
\end{equation}
The direct Stefan problem is to determine 
$u(x,t)\in C(\cc {Q_{s,T}})\cap C^{2,1} (Q_{s,T})$ and $s(t) \in C^1((0,T])\cap C^{0, 1}([0,T])$, satisfying 
 \begin{align}
    u_t-u_{xx}&=0  &\quad in \quad  Q_{s,T}, \label{1.16} \\
   -u_x(0,t)&=h(t)>0 \quad &0<t<T, \label{1.17} \\
   u(x,0)&=u_0(x) \geq 0 \quad &0<x<b, \label{1.18}\\
   -u_x(s(t),t)&=\overset{.}{s}(t) \quad &0<t<T, \label{1.19}\\   
   u(s(t),t)&=0 \quad &0<t<T, \label{1.20}\\
   s(0)&=b, \label{1.21}
  \end{align}
where $h\in C([0,T])$, $u_0\in C([0,b])$ are given.\\

Let $H>0$ be a fixed constant. Throughout  the paper we assume 
   \begin{equation}\label{1.22}
 h\in C([0,T]), \ \ h(t)>0 \ \ 0\leq t\leq T,
\end{equation}
\begin{equation}\label{1.23}
 u_0\in C([0,b]), \ \ 0\leq u_0(x)\leq H(b-x), \ \ 0\leq x\leq b.
\end{equation}
Let
\bean \label{M}
M= \max\{\|h\|_\infty, \, H \}.
\eean

Next, we give a result of existence and uniqueness of the Stefan problem  (\ref{1.16})-(\ref{1.21}).

\begin{theorem}\label{thm 1.19} 
The problem (\ref{1.16})-(\ref{1.21}) admits a unique solution $(s(t), u(x,t))$. In addition the solution  $(s(t), u(x,t))$
satisfies
\bean \label{ineq1}
&0<u(x,t)\leq M(s(t)-x)  &\textrm{  in } Q_{s,T},\\
&0\leq  - u_x(s(t),t)= \overset{.}{s}(t) \leq M & \textrm{  in }  (0, T]. \label{ineq2}
\eean

\end{theorem}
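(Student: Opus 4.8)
The plan is to recast the free boundary problem (\ref{1.16})--(\ref{1.21}) as a system of nonlinear integral equations via the heat potential representation, and then run a fixed-point argument combined with a priori bounds. Concretely, using the Green's function for the heat equation on the half-line with a Neumann condition at $x=0$, any solution $u$ of (\ref{1.16})--(\ref{1.18}) can be written as a single-layer potential on the moving curve $x=s(t)$ plus explicit terms coming from $h$ and $u_0$. Imposing the two conditions (\ref{1.19})--(\ref{1.20}) on $x=s(t)$ then produces a coupled pair: a second-kind Volterra integral equation for the potential density $\varphi(t)$ and, after one integration in time of (\ref{1.19}), an integral equation for $s(t)$ with $s(0)=b$. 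On a short time interval $[0,\delta]$ the kernels are weakly singular of Abel type, so the right-hand side defines a contraction in a suitable closed ball of $C([0,\delta])$, giving local existence and uniqueness.

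The second step is to upgrade local existence to global existence on $[0,T]$, and this is where the sign hypotheses (\ref{1.22})--(\ref{1.23}) and the constant $M$ in (\ref{M}) enter decisively. The key is the maximum principle: since $h>0$ the influx keeps $u\ge 0$, and comparing $u$ with the explicit supersolution $(x,t)\mapsto M(s(t)-x)$ — which satisfies the heat equation, dominates $u_0$ at $t=0$ by (\ref{1.23}), has the correct sign of flux at $x=0$ by $\|h\|_\infty\le M$, and vanishes on $x=s(t)$ — yields $0<u(x,t)\le M(s(t)-x)$ in $Q_{s,T}$, which is exactly (\ref{ineq1}). Evaluating the $x$-derivative of this inequality at the free boundary (using $u=0$ on $x=s(t)$ and the Hopf lemma for the lower bound) gives $0\le -u_x(s(t),t)\le M$, and then (\ref{1.19}) turns this into $0\le \dot s(t)\le M$, i.e. (\ref{ineq2}). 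In particular $s$ stays Lipschitz with constant $M$, so $b\le s(t)\le b+MT$ remains in a fixed compact range; the local solution therefore cannot blow up and can be continued step by step to all of $[0,T]$, with the a priori bounds preventing the existence time from shrinking to zero.

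For uniqueness globally, I would argue that two solutions agreeing at $t=0$ must coincide on the initial interval $[0,\delta]$ by the contraction property, and then use the a priori bounds to restart the argument, or equivalently subtract the two integral systems, use the monotonicity/sign structure and a Gronwall-type estimate on the difference $s_1-s_2$ together with a bound on the difference of the densities, to conclude they agree on $[0,T]$. The regularity claims $u\in C(\overline{Q_{s,T}})\cap C^{2,1}(Q_{s,T})$ and $s\in C^1((0,T])\cap C^{0,1}([0,T])$ follow from standard parabolic potential theory once the density $\varphi$ is known to be continuous and $s$ Lipschitz: interior Schauder estimates give the $C^{2,1}$ regularity of $u$ away from $t=0$, and bootstrapping the integral equation for $s$ (whose right-hand side is $C^1$ in $t$ for $t>0$ once $\varphi$ is continuous) gives $s\in C^1((0,T])$.

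The main obstacle I anticipate is the global continuation: making sure that the local existence interval $\delta$ can be taken uniform as one marches in time, which requires the a priori estimates (\ref{ineq1})--(\ref{ineq2}) to hold for \emph{any} solution on its interval of existence, not just the one being constructed — so the maximum-principle comparison and the Hopf-lemma lower bound on $-u_x(s(t),t)$ must be established first, independently of the fixed-point construction, and then fed back in. A secondary technical point is the weak (Abel-type) singularity of the potential kernels near $t=0$, which must be handled carefully when verifying the contraction and when differentiating the equation for $s$; this is routine but is where most of the calculation lives. Since the excerpt attributes these facts to the classical references \cite{Fr, LSU, Ru, cannon1984one}, I would cite them for the parts of the argument (short-time well-posedness of the potential system, Schauder estimates) that are standard and concentrate the written proof on the comparison argument that produces the quantitative bounds with the explicit constant $M$.
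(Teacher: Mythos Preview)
Your proposal is correct and matches the paper's own treatment: the paper does not give a proof but simply states that it ``is based on the Maximum principle and the fixed point Theorem'' and refers to \cite{CM, cannon1967stability}. Your outline---heat-potential reformulation, local fixed point, comparison with the explicit barrier $M(s(t)-x)$ (valid once $\dot s\ge 0$ via Hopf), and continuation using the resulting a priori bounds---is precisely the classical argument those references carry out.
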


The proof  is based on the Maximum principle and the  fixed point Theorem. The existence and uniqueness of the Stefan problem
has been studied by various authors  under different smoothness assumptions (see for instance \cite{CM, cannon1967stability} and 
references therein). 
\begin{remark} 
The assumption on the sign of the flux $h(t)$ means that heat is entering the domain, so that, for each $t$  only one phase remains.
If the sign requirement on $h(t)$ are not fulfilled,  the direct  Stefan problem may not have a global solution 
or may  be an ill-posed problem \cite{DF, PSS}.
\end{remark}
\begin{remark} \label{sbounds}
Since  $C^{0, 1}([0, T])$ and  $ W^{1, \infty}(0, T) $  are somehow identical for smooth domains 
(Theorem 4, page 294 in  \cite{Ev}), we deduce  from Theorem \ref{thm 1.19} that $s \in W^{1, \infty}(0, T) $. 
Moreover, a simple calculation gives  
\bean \label{sint}
s(t) = b +\int_0^t h(\tau) d\tau + \int_0^b u_0(x) dx- \int_0^{s(t)} u(x, t) dx.
\eean
Since $u$ is nonnegative, we deduce from \eqref{sint}, the following estimate
\bea
b \leq s(t) \leq T\|h\|_{\infty} +b (\|u_0\|_{\infty}+1) & \textrm{  in }  (0, T],
\eea
which combined to inequality \eqref{ineq2}, leads to 
\bean \label{sbound}
\|s\|_{W^{1, \infty}} \leq s_\infty:=\max(T\|h\|_{\infty} +b (\|u_0\|_{\infty}+1); M).
\eean
 \end{remark}

Next we give some useful  properties of the solutions of Stefan problem as well as 
their stability with respect to boundary and  initial data.\\

Consider two sets $(h_i(t), u_{0, i}(x)), i=1, 2 $,  of Stefan data satisfying assumptions \eqref{1.22} and \eqref{1.23}.
Theorem \ref{thm 1.19}  guarantees  the existence of an unique solution $(s_i, u_i)$ to each one of the two problems. 

\begin{theorem} \label{stab}
If $b_1<b_2$, then the free boundaries $s_1(t)$, $s_2(t), $ corresponding to the data $(h_i(t), u_{0, i}(x)), i=1, 2 $, satisfy 
\begin{align}
\abs{s_1(t)-s_2(t)}\leq C\bigg( b_2-b_1+\int^{b_1}_0 \abs{u_{0,1}(x)-u_{0,2}(x)}dx +\int^{b_1}_{b_2} u_{0,2}(x) dx +\intt \abs{h_1(\t)-h_2(\t)}d\t  \bigg), \; 0\leq t\leq T,
\end{align}
where $C>0$ only depends on $T$ and $M$.
\end{theorem}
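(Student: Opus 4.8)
The plan is to squeeze both free boundaries between the fronts produced by the pointwise minorant and the pointwise majorant of the two data sets, and then to estimate the gap between these two extreme fronts by means of the integral identity \eqref{sint}.

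First I would build admissible majorant/minorant data. Since $b_1<b_2$, set $b^-=b_1$, $b^+=b_2$, take $h^-=\min(h_1,h_2)$ and $h^+=\max(h_1,h_2)$ on $[0,T]$, $u_0^-=\min(u_{0,1},u_{0,2})$ and $u_0^+=\max(u_{0,1},u_{0,2})$ on $[0,b_1]$, and $u_0^+=u_{0,2}$ on $[b_1,b_2]$. Using $u_{0,i}(b_i)=0$ one checks that $h^\pm$ and $u_0^\pm$ are continuous and satisfy \eqref{1.22}--\eqref{1.23} with $H$ replaced by $\max(H_1,H_2)$, so Theorem~\ref{thm 1.19} furnishes solutions $(s^\pm,u^\pm)$ on $[0,T]$ with $u^\pm\ge0$.

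The central tool is the classical comparison principle for the one-phase Stefan problem: if two admissible data sets are ordered componentwise --- fluxes at $x=0$, initial temperatures on the common space interval, and initial fronts --- then the corresponding free boundaries are ordered and the temperatures are ordered on the smaller space--time domain (see e.g. \cite{CM, cannon1984one}). Applied to the pairs formed by the minorant data, the data of index $i$ and the majorant data, it gives $s^-(t)\le s_i(t)\le s^+(t)$ for $i=1,2$ and all $t\in[0,T]$, whence $|s_1(t)-s_2(t)|\le s^+(t)-s^-(t)$. To bound the latter I would write \eqref{sint} for $s^+$ and for $s^-$ and subtract; since comparison also yields $u^+\ge u^-$ on $Q_{s^-,T}$ while $u^+\ge0$ and $s^-\le s^+$, the heat-content term $\int_0^{s^+(t)}u^+\,dx-\int_0^{s^-(t)}u^-\,dx$ is nonnegative and can be discarded, leaving
\[
s^+(t)-s^-(t)\le (b_2-b_1)+\int_0^t\bigl(h^+-h^-\bigr)(\tau)\,d\tau+\int_0^{b_1}\bigl(u_0^+-u_0^-\bigr)(x)\,dx+\int_{b_1}^{b_2}u_{0,2}(x)\,dx .
\]
Because $h^+-h^-=|h_1-h_2|$ on $[0,T]$ and $u_0^+-u_0^-=|u_{0,1}-u_{0,2}|$ on $[0,b_1]$, the right-hand side is exactly the bracketed quantity in the statement, so the argument actually yields the estimate with an absolute constant, and a fortiori with the stated $C=C(T,M)$.

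The step needing genuine care is the comparison principle itself. For two admissible data sets, the $a$-labelled one componentwise below the $b$-labelled one, one proves $s_a\le s_b$ by contradiction: if $t_0$ is the first time the ordering of the fronts fails, then on $Q_{s_a,t_0}$ the difference $w=u_b-u_a$ is caloric and nonnegative on the parabolic boundary --- on $\{t=0\}$ and on $\{x=s_a(t)\}$ because $u_b\ge0$ and $u_a(s_a(t),t)=0$, and at $x=0$ by the ordered Neumann data --- hence $w\ge0$ in $Q_{s_a,t_0}$ by the maximum principle, Hopf's lemma handling the $x=0$ side. Hopf's boundary point lemma applied to $w$ at the contact point $(s_a(t_0),t_0)$, where $w$ vanishes, together with the Stefan conditions \eqref{1.19} for $a$ and for $b$, forces $\dot s_a(t_0)<\dot s_b(t_0)$, contradicting that $s_b-s_a$ is positive immediately before $t_0$ and vanishes at $t_0$; the borderline case $w\equiv0$ is excluded by unique continuation for the heat equation and the uniqueness part of Theorem~\ref{thm 1.19}, and equal initial fronts are dealt with by the standard limiting argument. (A shorter but technically heavier alternative is to pass to the enthalpy formulation $\partial_t e=\partial_{xx}(e^+)$ for $x>0$, $0<t<T$, with $\partial_x(e^+)=-h$ at $x=0$, for which the $L^1$-contraction $\|e_1(\cdot,t)-e_2(\cdot,t)\|_{L^1}\le\|e_1(\cdot,0)-e_2(\cdot,0)\|_{L^1}+\int_0^t|h_1-h_2|\,d\tau$ holds; the zero-extensions of $u_1,u_2$ are then $e_1^+,e_2^+$ and $\|e_1(\cdot,0)-e_2(\cdot,0)\|_{L^1}$ is precisely the initial-data part of the claimed bound, so the estimate drops out of \eqref{sint}.)
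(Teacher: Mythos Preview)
The paper itself does not prove Theorem~\ref{stab}: it merely refers the reader to Cannon and Douglas~\cite{cannon1967stability} and then remarks that the constant $C$ obtained there is exponentially increasing in $T$ and $M$. So there is no in-paper proof to compare your proposal with directly.

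Your argument is correct and is essentially the classical route via monotone dependence for the one-phase Stefan problem (as in \cite{CH} and Chapter~17 of \cite{cannon1984one}): sandwich $s_1,s_2$ between the extreme fronts $s^-\le s^+$ associated with the pointwise minorant and majorant data, then subtract the heat-balance identity \eqref{sint} and discard the nonnegative heat-content difference. This indeed yields the estimate with the \emph{absolute} constant $C=1$, which is strictly sharper than what the paper attributes to \cite{cannon1967stability}; the discrepancy is presumably because the original 1967 proof works through an integral equation and a Gronwall-type step rather than the clean energy identity \eqref{sint}, and so picks up an exponential factor. Two points in your sketch deserve care: the equal-initial-front situation is not a corner case here but the generic one (it occurs when comparing $s^-$ with $s_1$, since $b^-=b_1$, and $s^+$ with $s_2$, since $b^+=b_2$), so the limiting/approximation argument you allude to really has to be carried out; and the Hopf lemma you invoke at the curved lateral boundary $x=s_a(t)$ must be the parabolic version valid on non-cylindrical domains.
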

The proof of this theorem is detailed in \cite{cannon1967stability}.  The constant $C>0$ appearing  in Theorem  \ref{stab}
 is in fact exponentially increasing as function of  $T$ and $M$.

\section{The inverse problem}
In this paper we are interested in the following  inverse problem: {\it to determine  $h=-u_x$ from the knowledge of 
the moving boundary $s$ and  the initial state $u_0$.} Clearly, the inverse Stefan problem is a Cauchy-like problem 
of determining a function $u$ which satisfies \cite{cannon1967cauchy}

\begin{eqnarray}\label{Inverse problem}
(\mathcal{P})\left\{ \begin{array}{rll}
   u_t-u_{xx}&=0  &in \quad  Q_{s,T},\cr
   u(x,0)&=u_0(x) \geq 0,  &0<x<b,\cr
   -u_x(s(t),t)&=\overset{.}{s}(t), &0<t<T,\cr   
   u(s(t),t)&=0, &0<t<T,\cr
   s(0)&=b. & \cr 
\end{array}\right.
\end{eqnarray}
We first  study the uniqueness of recovery of the heat flux for a given free boundary $s\in  C^{1}([0,T])$, and 
a nonnegative initial state $u_0\in C^1([0,b])$. Showing the  uniqueness  in Stefan inverse problem is then 
equivalent to proving  that the linear non-standard Cauchy problem 
$ (\mathcal{P})$ admits a unique solution. The following  result has been  established  by 
J.R. Cannon and J. Douglas in \cite{cannon1967cauchy}.
\begin{proposition} \label{uniqueness}
For given $s\in C^1([0,T])$ and $u_0\in C^1([0,b])$, there can exist at most one solution 
$u\in C^2 (Q_{s,T}) \cap C^1(\cc{Q_{s,T}})$ of the problem $(\mathcal{P})$.
\end{proposition}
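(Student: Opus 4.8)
The plan is to prove uniqueness by a standard energy/maximum-principle argument applied to the difference of two hypothetical solutions. Suppose $u_1, u_2 \in C^2(Q_{s,T}) \cap C^1(\cc{Q_{s,T}})$ both solve $(\mathcal P)$ for the \emph{same} given data $s$ and $u_0$. Set $w = u_1 - u_2$. Then $w$ satisfies the heat equation $w_t - w_{xx} = 0$ in $Q_{s,T}$, with $w(x,0) = 0$ for $0 < x < b$, and on the (given, fixed) curve $x = s(t)$ we have both $w(s(t),t) = 0$ and $w_x(s(t),t) = 0$. The goal is to conclude $w \equiv 0$, which then forces $-u_{1,x}(0,t) = -u_{2,x}(0,t)$, i.e. the recovered flux $h$ is unique.

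The key step is a Holmgren-type / energy identity. First I would multiply $w_t - w_{xx} = 0$ by $w$ and integrate over $Q_{s,t_0} = Q_{s,T} \cap \{t < t_0\}$. Integration by parts in $x$ produces boundary terms on $x = 0$ and on $x = s(t)$; on the free-boundary side both $w$ and $w_x$ vanish, so that contribution drops out entirely, and there is \emph{no} contribution at $x=0$ only if we are careful — actually the issue is that at $x = 0$ we have neither $w$ nor $w_x$ prescribed a priori. The clean way around this is to reverse the roles: use the overdetermined Cauchy data on $x = s(t)$ and run the energy estimate \emph{backward} from that curve, or equivalently extend $w$ by zero across $x = s(t)$ and invoke analyticity of solutions of the heat equation in the space variable. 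Concretely, I would argue that $w$, being a solution of the heat equation, is real-analytic in $x$ for each fixed $t > 0$; since $w$ and $w_x$ both vanish on the analytic curve $x = s(t)$, a unique-continuation argument (Cauchy–Kovalevskaya / Holmgren for the heat operator, or the reflection principle) forces $w$ to vanish in a full neighborhood of that curve inside $Q_{s,T}$, and then propagating this one gets $w \equiv 0$ throughout $Q_{s,T}$, hence $u_1 \equiv u_2$.

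Alternatively, and perhaps more in the spirit of the paper's "integral equations" approach, I would represent $w$ via the single-layer and double-layer heat potentials on the lateral boundaries $x = 0$ and $x = s(t)$: write $w$ as a heat potential with unknown densities, use the zero initial condition to kill the volume term, and then the homogeneous Cauchy data on $x = s(t)$ yields a homogeneous system of Volterra integral equations of the second kind for the densities. Because Volterra operators with weakly singular kernels on a finite time interval are quasi-nilpotent, the only solution is the trivial one, so the densities vanish and $w \equiv 0$. The main obstacle in either route is handling the regularity at the corner $t = 0$, $x = b$ and the mild singularity of the heat kernel on the moving boundary: one must justify the integration by parts / potential representation up to $t = 0$ using only $u_i \in C^1(\cc{Q_{s,T}})$ and $s \in C^1([0,T])$, which is exactly the hypothesis, so a short approximation argument (integrating over $\{\epsilon < t < t_0\}$ and letting $\epsilon \to 0$, using $w(\cdot,0) = 0$) closes the gap.
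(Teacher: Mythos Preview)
The paper does not prove this proposition; it simply attributes the result to Cannon and Douglas \cite{cannon1967cauchy} and uses it as a black box. So there is no in-paper proof to compare against directly.

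Your second approach --- representing the difference $w=u_1-u_2$ via heat potentials on the lateral boundaries and reducing the homogeneous Cauchy data on $x=s(t)$, together with the zero initial data, to a homogeneous Volterra system of the second kind --- is essentially what Cannon--Douglas do, and it dovetails with the Neumann-function representation \eqref{uxt} that this paper relies on throughout. That is the route most consistent with the paper's style.

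Your first approach is also viable, but the specific justification you give is slightly off: real-analyticity of $w(\cdot,t)$ in $x$ for each \emph{fixed} $t$ only tells you that $w$ and $w_x$ vanish at the single point $x=s(t)$, which is not enough to conclude $w(\cdot,t)\equiv 0$. The correct statement is that the surface $\{x=s(t)\}$ is non-characteristic for $\partial_t-\partial_x^2$ (its conormal has nonzero $x$-component), so Holmgren's uniqueness theorem --- analytic coefficients, $C^1$ non-characteristic hypersurface --- applies and gives $w\equiv 0$ in a neighborhood of the curve, hence everywhere by connectedness. Your diagnosis of why the naive energy identity fails (no control at $x=0$) is exactly right, and abandoning it was the correct call.
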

Since $u \in C^1(\cc{Q_{s,T}})$, we deduce from Proposition  \ref{uniqueness} that there exists a unique solution 
to inverse  Stefan 
problem given by $h=-u_x$.\\

Notice that required $C^1$ regularity of the free boundary $s(t)$ in the previous uniqueness result  
 is not  guaranteed by the regularity assumptions  \eqref{1.22} and \eqref{1.23} (does not 
 need to be $C^1$ close to $0$). In order to reach 
 such a regularity one needs to impose stronger smoothness  and compatibility conditions on the data
  \cite{Ky, CM}.\\ 
  
Let $A : L^2(0, T) \rightarrow L^2(0, T)$ be the Laplacian $ Ah = -h_{tt} $ with Dirichlet boundary 
condition. It is an unbounded self-adjoint, strictly  positive operator with a compact resolvent. \\

Denote by 
${ D}(A^{\frac{1}{2}})$ the domain of 
$A^{\frac{1}{2}}$, and  introduce  for $\beta \in \mathbb R$ the scale of Hilbert
spaces $X_{\beta}$, as follows: for every
$\beta \geq 0$, $X_{\beta}= {D}(A^{\frac{\beta}{2}})$, with the norm
$\|h \|_\beta=\|A^{\frac{\beta}{2}} h\|_{L^2}$ (note that 
$0 \notin \sigma(A) $ where $\sigma(A)$
is the spectrum of $A$). The space $X_{-\beta}$ is
defined by duality with respect to the pivot space $X$ as
follows: $X_{-\beta} =X_{\beta}^*$ for $\beta>0$ \cite{AT}.
The norm  $\|h \|_\beta$ is given  in terms of the eigenvalues
and eigenfunctions of the operator $A$ \cite{RY}.\\

Let $A_0$ be  the Abel linear integral  operator \cite{Br}

\bea
\label{A0extb}
A_0:  L^2(0, T) \rightarrow L^2(0, T)\\
A_0 h(t) := \frac{1}{\sqrt{\pi}} \int^{t}_0 \frac{1}{\sqrt{t-\t}} h(\t) d\t.
\eea
In \cite{RY}, the authors  showed the equivalence between  the norm $\|h \|_{-\frac{1}{2}}$
and $\|A_0 h\|_{L^2}$, that is, there exists an universal constant $C>0$, such that 
\bean \label{equiv}
C^{-1}\|h \|_{-\frac{1}{2}} \leq \| A_0 h\|_{L^2} \leq C\|h \|_{-\frac{1}{2}}.
\eean

Next, we state  our main result in this paper. 
\begin{theorem}\label{main theorem}  Let
$u_0 \in C([0, T])$ be a given function verifying assumption \eqref{1.23},
 $h, \w h \in C([0, T])$ be strictly positive functions satisfying $\|h\|_{\infty}, \|\w h\|_\infty  \leq M$.
 Let $u$ and $\tilde u$ be the solutions  to  the system (\ref{1.16})-(\ref{1.21}) associated to respectively 
 $h$ and $ \w h$. Denote $s$,    $\w s $  the free boundaries of respectively the solutions $u$ and $\w u$, and
assume that 
\bea
\norm{s-\w {s}}_{W^{1, \infty}} < e^{-1}.
\eea
 Then 
\begin{equation} \label{mainineq}
\|h-\w h\|_{-\frac{1}{2}}\leq  \frac{C}{\ln\left(  \abs{\ln \norm{s-\w s}_{W^{1, \infty}}}\right)},
\end{equation}
where  the constant $C>0$,  depends only on $u_0$, $b$, $T$, $H$,  and  $M$.
\end{theorem}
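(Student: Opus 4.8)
The plan is to convert the free-boundary measurement into a Cauchy-type boundary measurement on the fixed lateral boundary $x=0$, and then to exploit the analytic (holomorphic) dependence of solutions of the heat equation on the data, combined with a logarithmic-type unique continuation estimate for holomorphic functions, much in the spirit of the Abel-operator reformulation recalled in \eqref{A0extb}--\eqref{equiv}. First, I would write an integral (single-layer potential) representation for the solution $u$ of $(\mathcal P)$ on the domain $Q_{s,T}$ in terms of the heat kernel $\Gamma(x,t;\xi,\tau)=\frac{1}{\sqrt{4\pi(t-\tau)}}e^{-(x-\xi)^2/4(t-\tau)}$. Using the two conditions \eqref{1.19}--\eqref{1.20} on the moving front, one expresses $u$ and its derivatives along $x=0$ through a Volterra integral equation whose kernel depends on $s$; the difference $w=u-\tilde u$ then satisfies a linear Volterra equation driven by $s-\tilde s$. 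The point is that the operator mapping $h$ to the data is, after the Abel-type desingularization, essentially multiplication by an analytic symbol, so that controlling $h-\tilde h$ in the $\|\cdot\|_{-1/2}$ norm reduces to controlling an analytic function on an interval by its values where $s-\tilde s$ is small.

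Next, I would fix the stability mechanism. By Theorem~\ref{stab} and Remark~\ref{sbounds}, all the relevant quantities ($s_\infty$, the $W^{1,\infty}$ bounds, the bounds \eqref{ineq1}--\eqref{ineq2}) are controlled by $u_0,b,T,H,M$, so the constants stay uniform. The key analytic input is a two-constants / Hadamard three-lines type inequality: the function encoding $h-\tilde h$ extends holomorphically to a fixed complex neighborhood (a sector or strip) with an a priori bound $\le C M$ there, while on the measurement set it is bounded by $\varepsilon:=\|s-\tilde s\|_{W^{1,\infty}}$. Harmonic measure / logarithmic convexity then yields a bound of the form $\|h-\tilde h\|_{-1/2}\le C M^{1-\mu}\,\varepsilon^{\mu}$ only on a subinterval, and propagating this to the whole interval $[0,T]$ costs another logarithm, producing the double-logarithm $1/\ln|\ln\varepsilon|$ in \eqref{mainineq}. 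The hypothesis $\varepsilon<e^{-1}$ is exactly what makes $|\ln\varepsilon|>1$ and then $\ln|\ln\varepsilon|>0$, so the right-hand side is well defined and finite.

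The step I expect to be the main obstacle is the passage from the geometric smallness $\|s-\tilde s\|_{W^{1,\infty}}<\varepsilon$ to a quantitative bound on the \emph{trace data} on $x=0$ for the difference problem: the two solutions $u,\tilde u$ live on \emph{different} domains $Q_{s,T}$ and $Q_{\tilde s,T}$, so one must first transport them to a common domain (e.g. via the change of variables $y=x\,\tilde s(t)/s(t)$ or $y=x/s(t)$), pick up the resulting lower-order perturbation terms in the PDE, and show these are $O(\varepsilon)$ in the right norm using the a priori regularity from Theorems~\ref{thm 1.19}--\ref{stab}. Once $w$ solves a genuine parabolic problem on a fixed cylinder with $O(\varepsilon)$ Cauchy data on the free side, the holomorphic unique continuation / Abel-operator estimate \eqref{equiv} can be applied as above. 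A secondary technical point is justifying the holomorphic extension in the time variable with explicit control of the width of the analyticity region in terms of $M,T$; this is where the strict positivity of $h,\tilde h$ and the bounds \eqref{ineq2} keeping $\dot s,\dot{\tilde s}\le M$ are used to prevent degeneration of the domain.
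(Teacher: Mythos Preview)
Your overall architecture is the paper's: integral representation via the heat kernel, reduction to an Abel-type operator so that $\|h-\tilde h\|_{-1/2}\asymp\|A_0(h-\tilde h)\|_{L^2}$, and a Two-Constants/harmonic-measure argument for the holomorphic continuation. Two points, however, are misidentified and would block the proof as written.

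\textbf{Analytic continuation is in the spatial variable, not in time.} With only $h,\tilde h\in C([0,T])$ there is no holomorphic extension in $t$ available, and the ``analyticity in time'' step you flag at the end simply does not hold under the stated hypotheses. What the paper actually does is set, for fixed $t$,
\[
F(z,t)=\int_0^t N(z,0;t,\tau)\,(h-\tilde h)(\tau)\,d\tau
=\frac{1}{\sqrt\pi}\int_0^t\frac{e^{-z^2/4(t-\tau)}}{\sqrt{t-\tau}}\,(h-\tilde h)(\tau)\,d\tau,
\]
which is holomorphic in $z$ on a triangle $\Omega=\{0<\mathrm{Re}\,z<2s_\infty,\ |\mathrm{Im}\,z|<\mathrm{Re}\,z\}$ with the uniform bound $|F|\le 4M\sqrt{T}/\sqrt\pi$. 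The Two-Constants Theorem is then applied in the $z$-variable, using that $|F(x,t)|\le C\|s-\tilde s\|_{W^{1,\infty}}$ on the real segment $[s_\infty,2s_\infty]$; the harmonic measure of $\Omega\setminus[s_\infty,2s_\infty)$ satisfies $w_0(x)\ge (C/x)e^{-c/x}$ near $x=0$ (via a comparison with thin rectangles), and combining this with the elementary estimate $|F(x,t)-F(0,t)|\le C\sqrt x$ and optimizing in $x$ is what produces the $1/\ln|\ln\varepsilon|$. This is also where the mechanism for the second logarithm differs from what you describe: it is not a propagation along $[0,T]$ but the degeneracy of harmonic measure at the vertex $x=0$.

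\textbf{The domain mismatch is not handled by a change of variables.} The paper never maps $Q_{s,T}$ and $Q_{\tilde s,T}$ to a common cylinder. Instead it subtracts the two integral equations obtained from $u(s(t),t)=0$ and $\tilde u(\tilde s(t),t)=0$, getting
\[
\int_0^t N(s(t),0;t,\tau)(h-\tilde h)(\tau)\,d\tau=\sum_{i=1}^4 I_i,
\]
where each $I_i$ is a kernel-difference or $\dot s$-difference term that is bounded directly by $C\|s-\tilde s\|_{W^{1,\infty}}$ using only the Lipschitz bounds from Remark~\ref{sbounds} and the mean-value inequality for $e^{-a}-e^{-b}$. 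The maximum principle for the heat equation then upgrades this to $|F(x,t)|\le C\|s-\tilde s\|_{W^{1,\infty}}$ for all $x\ge s(t)$, in particular on $[s_\infty,2s_\infty]$, which is the input to the holomorphic step above. Your proposed route (transport to a common domain, pick up lower-order terms) might be made to work, but it is not needed and is not what the paper does; and in any case it would still leave you needing analyticity in some variable, which must be the spatial one.
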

\begin{remark}
The double logarithmic  stability estimate \eqref{mainineq} shows that the inverse  Stefan problem is indeed  ill-posed. 
The obtained result is then  consistent  with  known stability estimates in  the standard  Cauchy problem for parabolic equations
 in time independent  domains   (see for instance Theorem 1.1 in \cite{CY}). Therefore the motion of the  free boundary causing the 
nonlinearity and  higher complexity of  the direct problem,  does not seem to modify the nature of the Cauchy inversion.

\end{remark}

\begin{remark}
Our stability estimate is  carried out within the Hilbert scale $X_{-\frac{1}{2}}$, but  it  can be also done using 
more  classical spaces. The  characterization  of  $X_{\beta}, \beta \in \mathbb R$,  in terms of Sobolev spaces, can be found 
in \cite{RY}.  The inequality \eqref{equiv} shows that the Abel linear integral  operator  is  invertible from $L^2(0, T)$ to  $X_{-\frac{1}{2}}$,
and hence expressing $h-\w h$  in this later space, is indeed  the best choice when  $L^2$ norm  estimates  of $A_0(h-\w h)$
are available.

\end{remark}

\section{Proof of Theorem \ref{main theorem}} 
We first derive an integral representation of the solution of  the system (\ref{1.16})-(\ref{1.21}).  The following result 
can be found in many references including \cite{cannon1967cauchy, Fr2}.  For the convenience of the reader, we provide its 
proof. 

\begin{lemma}  Under the assumptions of Theorem \ref{main theorem}, the solution $u$ 
to the system (\ref{1.16})-(\ref{1.21}) associated to  $h$, satisfies  

\begin{equation}\label{uxt}
u(x,t)= \intt N(x,0;t,\t)h(\t) d\t -\intt N(x,s(\t);t,\t) \dot s(\t) d\t+ \int^b_0 N(x,\xi;t,0)u_0(\xi) d\xi,
\end{equation}
where the Neumann function $N$ is defined by
\begin{equation}
\label{N=K+K}
N(x,\xi;t,\tau)=K(x,\xi;t,\tau)+ K(-x,\xi;t,\tau), 
\end{equation}
with
\begin{equation}
K(x,\xi;t,\tau)=\frac{1}{2\sqrt{\pi(t-\tau)}} \exp\bigg(\frac{-(x-\xi)^2}{4(t-\tau)} \bigg), \ \ \tau<t.
\end{equation} 
\end{lemma}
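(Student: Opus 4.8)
The plan is to derive the representation formula \eqref{uxt} by the classical heat-potential method, exploiting that the Neumann function $N$ is built so that $N_\xi(x,0;t,\tau)=0$ at the left boundary $\xi=0$. First I would fix $(x,t)\in Q_{s,T}$ and apply Green's identity for the heat operator on the domain $Q_{s,T}$ (suitably truncated to $\{\tau<t-\varepsilon\}$ to avoid the singularity of the fundamental solution at $\tau=t$) to the pair $u(\xi,\tau)$ and $N(x,\xi;t,\tau)$, viewed as a function of $(\xi,\tau)$. Since $u$ solves $u_\tau-u_{\xi\xi}=0$ and $N$ solves the backward heat equation $N_\tau+N_{\xi\xi}=0$ in $\tau$ (it is a fundamental solution in the variables $(t-\tau)$ and $x-\xi$), the area integral of $\partial_\tau(uN)-\partial_\xi(u N_\xi - u_\xi N)$ vanishes, so integrating over $Q_{s,T}\cap\{\tau<t-\varepsilon\}$ and using the divergence theorem turns everything into boundary terms.

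Next I would identify the three pieces of the boundary of $Q_{s,T}$: the bottom $\{\tau=0,\ 0<\xi<b\}$, the left lateral side $\{\xi=0,\ 0<\tau<t\}$, and the moving front $\{\xi=s(\tau),\ 0<\tau<t\}$; plus the auxiliary top $\{\tau=t-\varepsilon\}$. On the bottom, the term $\int N(x,\xi;t,0)u(\xi,0)\,d\xi = \int_0^b N(x,\xi;t,0)u_0(\xi)\,d\xi$ appears from the initial condition \eqref{1.18}. On the left side, the flux term $u_\xi N - u N_\xi$ evaluated at $\xi=0$ reduces, by the construction \eqref{N=K+K} which forces $N_\xi(x,0;t,\tau)=0$, to $N(x,0;t,\tau)\,u_\xi(0,\tau) = -N(x,0;t,\tau)h(\tau)$ after using \eqref{1.17}; this gives the first term of \eqref{uxt} (with the correct sign once the outward normal on $\xi=0$ is accounted for). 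On the moving front, using $u(s(\tau),\tau)=0$ from \eqref{1.20} kills the $uN_\xi$-type contribution, and the Stefan condition \eqref{1.19}, $-u_\xi(s(\tau),\tau)=\dot s(\tau)$, together with the geometry of the outward normal $(1,-\dot s(\tau))/\sqrt{1+\dot s(\tau)^2}$ on that curve, produces exactly $-\int_0^t N(x,s(\tau);t,\tau)\dot s(\tau)\,d\tau$ — here one must be a little careful to combine the normal-derivative contribution with the part of $\partial_\tau(uN)$ that lives on the slanted boundary, but since $u$ vanishes along the front the only surviving term is the one carrying $u_\xi$, which is precisely $\dot s$. Finally, letting $\varepsilon\to 0$, the top integral $\int_0^{s(t-\varepsilon)} N(x,\xi;t,t-\varepsilon)u(\xi,t-\varepsilon)\,d\xi$ converges to $u(x,t)$ by the standard approximate-identity property of the Gaussian kernel (the second term $K(-x,\xi;\cdot)$ contributes nothing in the limit since $x>0$ is in the interior and $-x$ is not), giving \eqref{uxt}.

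The main obstacle I anticipate is the careful bookkeeping on the moving boundary $\xi=s(\tau)$: one has to parametrize that portion of $\partial Q_{s,T}$ correctly, write the surface measure and outward unit normal in terms of $\dot s$, and check that the $dt$-contribution coming from the time-derivative part of the divergence combines with the $d\xi$-contribution from the spatial-derivative part so that the $\sqrt{1+\dot s^2}$ factors cancel and one is left with a clean $\dot s(\tau)\,d\tau$ integrand. A secondary technical point is justifying the limit $\varepsilon\to0$ and the interchange of limit and integral near the singularity of $K$, which is routine given $u\in C(\cc{Q_{s,T}})$ and the $W^{1,\infty}$ bound on $s$ from Remark \ref{sbounds}, but should be mentioned. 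I would also note that regularity of $u$ up to the boundary (so that $u_\xi(0,\tau)$ and $u_\xi(s(\tau),\tau)$ make sense) is guaranteed under the standing hypotheses via Theorem \ref{thm 1.19}.
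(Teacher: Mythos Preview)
Your proposal is correct and follows essentially the same route as the paper's proof: Green's identity for the pair $(u,N)$ on the domain truncated away from the singularity, the key cancellation $N_\xi(x,0;t,\tau)=0$ on the fixed boundary, the use of \eqref{1.17}, \eqref{1.19}, \eqref{1.20} on the lateral sides, and the approximate-identity limit $\varepsilon\to 0$ recovering $u(x,t)$ from the top integral. There is a harmless sign slip in your displayed divergence (the correct identity is $\partial_\xi(N u_\xi - u N_\xi) - \partial_\tau(Nu) = 0$), and the paper handles the moving front via the Leibniz rule $\partial_\tau\int_0^{s(\tau)} Nu\,d\xi$ rather than your outward-normal bookkeeping, but since $u(s(\tau),\tau)=0$ both computations collapse to the same term and neither difference affects the argument.
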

\begin{proof}
By definition, we have 
\begin{equation}\label{N}
N(x,\xi;t,\t)=\frac{1}{2\sqrt{\pi(t-\tau)}}\b[ \exp\bigg (\frac{-(x-\xi)^2}{4(t-\tau)} \bigg) + \exp\bigg(\frac{-(x+\xi)^2}{4(t-\tau)} \bigg)\b], \ \ \t<t.
\end{equation}
Let $ \ep>0$ be a  small constant.
Integrating the Green's identity 
\begin{equation}
\paxi (N \paxi u -u\paxi N) -\partial_\tau (Nu)=0
\end{equation}
over the domain $\{0<\xi<s(\tau), 0<\ep <\tau< t-\ep \}$, we obtain 
\begin{equation}\label{N3}
\int^{t-\ep}_\ep \int_0^{s(\tau)}  \bigg \{\paxi (N \paxi u -u\paxi N) -\partial_\tau (Nu)\b\} d\tau d\xi =0. 
\end{equation}
Hence
\begin{align*}
\int^{t-\ep}_\ep \int_0^{s(\tau)}  \paxi (N \paxi u -u\paxi N)d\tau d\xi &= \int^{t-\ep}_\ep N(x,s(\tau);t,\tau)u_\xi(s(\t),\t)    d\t - \int^{t-\ep}_\ep u(s(\t),\t) N_\xi(x,s(\t); t,\t) d\t \\ 
&-\int^{t-\ep}_\ep N(x,0;t,\t)u_\xi(0,\t) d\t+\int^{t-\ep}_\ep u(0,\tau) N_\xi(x,0;t,\t) d\t. 
\end{align*}
Differentiate (\ref{N}) with respect to $\xi$, we get
\begin{align*}
N_\xi(x,\xi;t,\t)          
=\frac{1}{2\sqrt{\pi(t-\tau)}} \bigg\{&\frac{(x-\xi)}{2(t-\t)}\exp\bigg( \frac{-(x-\xi)^2}{4(t-\tau)} \bigg)
- \frac{(x+\xi)}{2(t-\t)} \exp\bigg(\frac{-(x+\xi)^2}{4(t-\t)}\bigg)  \bigg \}.
\end{align*}
Consequently  $N_\xi(x,0;t,\t)=0$.
Allowing $\ep$ to tend to $0$, we obtain
\begin{align}
\intt \int_0^{s(\t)}  \paxi (N \paxi u -u\paxi N)d\tau d\xi &= \intt N(x,s(\tau);t,\tau)u_\xi(s(\t),\t) d\t - \intt u(s(\t),\t) N_\xi(x,s(\t); t,\t) d\t \nonumber \\ 
&-\intt N(x,0;t,\t)u_\xi(0,\t) d\t.\label{N1}
\end{align}
On the other hand, we have 
\begin{align*}
\int^{t-\ep}_\ep \int_0^{s(\t)} \p_\t (Nu) d\t d\xi &= \int^{t-\ep}_\ep \p_\t \int_0^{s(\t)} Nu d\xi d\t - \int^{t-\ep}_\ep \dot s(\t) N(x,s(\t);t,\t)u(s(\t),\t) d\t \\ &= \int^{s(t-\ep)}_0 N(x,\xi;t,t-\ep)u(\xi,t-\ep) d\xi-\int^{s(\ep)}_0 N(x,\xi;t,\ep)u(\xi,\ep) d\xi  \\ &- \int^{t-\ep}_\ep \dot s(\t) N(x,s(\t);t,\t) u(s(\t),\t) d\t.
\end{align*}
Allowing  again $\ep$ to tend to $0$, we find 
\begin{align}
\intt \int^{s(\t)}_0 \p_\t (Nu) d\t d\xi &= \int^{s(t)}_0 N(x,\xi;t,t)u(\xi,t) d\xi-\int^{s(0)}_0 N(x,\xi;t,0)u(\xi,0) d\xi  \nonumber \\ &- \intt \dot s(\t) N(x,s(\t);t,\t) u(s(\t),\t) d\t.\label{N2}
\end{align}
Substituting (\ref{N1})-(\ref{N2}) into (\ref{N3}), yields
\begin{align*}
 \intt N(x,s(\tau);t,\tau)u_\xi(s(\t),\t) d\t - \intt u(s(\t),\t) N_\xi(x,s(\t); t,\t) d\t -\intt N(x,0;t,\t)u_\xi(0,\t) d\t \\
 -\int^{s(t)}_0 N(x,\xi;t,t)u(\xi,t) d\xi-\int^{s(0)}_0 N(x,\xi;t,0)u(\xi,0) d\xi + \intt \dot s(\t) N(x,s(\t);t,\t) u(s(\t),\t) d\t=0.
\end{align*}
Finally, we obtain 
\begin{equation}
u(x,t)= \intt N(x,0;t,\t)h(\t) d\t -\intt N(x,s(\t);t,\t) \dot s(\t) d\t+ \int^b_0 N(x,\xi;t,0)u_0(\xi) d\xi. 
\end{equation}
\end{proof}

We next reduce the inverse  Stefan problem to solving an ill-posed linear integral equation. \\

Since $u$ is   the unique  solution  to  the system (\ref{1.16})-(\ref{1.21}) associated to  $s$, it 
satisfies  $u(s(t),t)=0$ for all $t \in (0, T)$, which combined to equality \eqref{uxt}, leads to 
\bea
u(s(t),t)= \intt N(s(t),0;t,\t)h(\t) d\t -\intt N(s(t),s(\t);t,\t) \dot s(\t) d\t+ \int^b_0 N(s(t),\xi;t,0)u_0(\xi) d\xi = 0, 
\eea
for all  $ t\in (0, T)$.
Consequently $h(t)$ is a solution to the following principal integral equation
\bean \label{eqq}\\  \nonumber
\intt N(s(t),0;t,\t)h(\t) d\t = \intt N(s(t),s(\t);t,\t) \dot s(\t) d\t- \int^b_0 N(s(t),\xi;t,0)u_0(\xi) d\xi, \textrm{ for all  } t\in (0, T).
\eean
The proof of stability is based on estimating the modulus of continuity of the corresponding linear integral operator.

\begin{lemma}
\label{Lemma_eta}
Under the  assumptions in Theorem \ref{main theorem}, the following inequality 
\begin{equation}\label{2.58}
 \abs{\intt N(x,0;t,\t)[h(\t)- \w h(\t)]d\t } \leq C \norm{s-\tilde{s}}_{W^{1, \infty} }^{1/4}, \textrm{  for all  }x \geq s(t), \;  0 < t < T,
\end{equation}
holds. The constant $C>0$ only depends on  $u_0$, $b$, $T$, $H$,  and  $M$.
\end{lemma}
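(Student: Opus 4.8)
The plan is to upgrade the representation \eqref{uxt} to an identity valid for all $x\geq s(t)$, subtract its versions for $h$ and $\w h$, and estimate the resulting difference of single-layer heat potentials by a boundary-layer splitting of the time integral. Fix $0<t<T$ and $x\geq s(t)$. Since $\dot s\geq 0$ by \eqref{ineq2}, $s$ is nondecreasing, so $x\geq s(\tau)$ for all $\tau\in[0,t]$; hence $(x,t)\notin\cc{Q_{s,t}}$ and $N(x,\cdot\,;t,\cdot)$ is a classical solution of the backward heat equation throughout $\cc{Q_{s,t}}$, its singularity at $(\xi,\tau)=(x,t)$ being absent. Re-running the Green's-identity computation of the preceding lemma in this case — where no $\varepsilon$-regularization is needed and the boundary term $\int_0^{s(t)}N(x,\xi;t,t)u(\xi,t)\,d\xi$ vanishes, the Dirac mass at $x$ lying outside $(0,s(t))$ — yields
\be
\intt N(x,0;t,\tau)h(\tau)\,d\tau=\intt N(x,s(\tau);t,\tau)\dot s(\tau)\,d\tau-\int_0^bN(x,\xi;t,0)u_0(\xi)\,d\xi,\qquad x\geq s(t).
\ee
Together with \eqref{uxt} for $0<x<s(t)$, this says that, with $\bar u$ denoting $u$ extended by $0$ to $\{x\geq s(t)\}$, the identity $\intt N(x,0;t,\tau)h(\tau)\,d\tau=\bar u(x,t)+\intt N(x,s(\tau);t,\tau)\dot s(\tau)\,d\tau-\int_0^bN(x,\xi;t,0)u_0(\xi)\,d\xi$ holds for all $x>0$, and likewise for $\w h,\w s,\bar{\w u}$.

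Subtracting, the common initial-data integrals cancel, and $\bar u(x,t)=0$ for $x\geq s(t)$, so for every such $x$,
\be
\intt N(x,0;t,\tau)\big[h(\tau)-\w h(\tau)\big]\,d\tau=-\,\bar{\w u}(x,t)+\intt\big[N(x,s(\tau);t,\tau)\dot s(\tau)-N(x,\w s(\tau);t,\tau)\dot{\w s}(\tau)\big]\,d\tau.
\ee
By \eqref{ineq1} applied to $\w u$, $0\leq\bar{\w u}(x,t)\leq M\max(\w s(t)-x,0)$, hence $\abs{\bar{\w u}(x,t)}\leq M\max(\w s(t)-s(t),0)\leq M\norm{s-\w s}_{W^{1,\infty}}$ for $x\geq s(t)$. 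It remains to bound the single-layer difference. Writing $N(x,\xi;t,\tau)=K(x-\xi;t-\tau)+K(x+\xi;t-\tau)$ with $K(z;\sigma)=\tfrac1{2\sqrt{\pi\sigma}}e^{-z^2/(4\sigma)}$, the elementary estimate $|z|e^{-z^2/(4\sigma)}\leq C\sqrt\sigma$ gives, uniformly in $x,\xi>0$, both $0<N(x,\xi;t,\tau)\leq(\pi(t-\tau))^{-1/2}$ and $\abs{\partial_\xi N(x,\xi;t,\tau)}\leq C(t-\tau)^{-1}$.

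Put $\delta=\norm{s-\w s}_{W^{1,\infty}}$ and $\eta=\delta^{1/2}$ (so $0<\eta<1$ by hypothesis). If $t\leq\eta$, bounding each kernel by $(\pi(t-\tau))^{-1/2}$ and $\dot s,\dot{\w s}$ by $M$ shows the last integral above is $\leq C\sqrt t\leq C\delta^{1/4}$. If $t>\eta$, split it at $\tau=t-\eta$: the part over $(t-\eta,t)$ is $\leq C\sqrt\eta=C\delta^{1/4}$ by the same crude bound, while on $(0,t-\eta)$ one decomposes the integrand as $[N(x,s(\tau);t,\tau)-N(x,\w s(\tau);t,\tau)]\dot s(\tau)+N(x,\w s(\tau);t,\tau)[\dot s(\tau)-\dot{\w s}(\tau)]$ and estimates, via the mean value theorem in $\xi$ and the two kernel bounds, the first piece by $CM\delta\int_0^{t-\eta}(t-\tau)^{-1}d\tau\leq CM\delta\,\abs{\ln\eta}$ and the second by $C\delta\int_0^{t-\eta}(t-\tau)^{-1/2}d\tau\leq C\sqrt T\,\delta$. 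Since $\eta=\delta^{1/2}$ makes $\delta\abs{\ln\eta}=\tfrac12\delta\abs{\ln\delta}\leq C\delta^{1/4}$ (because $\delta^{3/4}\abs{\ln\delta}$ stays bounded for $0<\delta<e^{-1}$) and $\delta\leq\delta^{1/4}$, the contribution of $(0,t-\eta)$ is $\leq C\delta^{1/4}$ too. Inserting these bounds into the subtracted identity gives $\abs{\intt N(x,0;t,\tau)[h(\tau)-\w h(\tau)]\,d\tau}\leq M\delta+C\delta^{1/4}\leq C\delta^{1/4}$ for all $x\geq s(t)$, $0<t<T$, with $C$ depending only on $T,M$ (hence, through \eqref{ineq1}, on $u_0,b,H$); this is \eqref{2.58}.

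The only genuine difficulty is this last step: $\partial_\xi N(x,\xi;t,\tau)$ blows up like $(t-\tau)^{-1}$ as $\tau\uparrow t$, a non-integrable rate, so the difference of the two single-layer potentials cannot be controlled by $\norm{s-\w s}_{W^{1,\infty}}$ alone. The remedy is the splitting at $\tau=t-\eta$, which trades an $O(\sqrt\eta)$ boundary-layer error near $\tau=t$ against an $O(\abs{\ln\eta}\,\norm{s-\w s}_{W^{1,\infty}})$ error away from it; the choice $\eta=\norm{s-\w s}_{W^{1,\infty}}^{1/2}$ then produces the fractional exponent. Finally, note that the extended identity of the first step relies essentially on $\dot s\geq 0$ from \eqref{ineq2} — this is where the one-phase sign condition enters.
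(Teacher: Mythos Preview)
Your proof is correct and establishes exactly the stated exponent $1/4$, but the route differs substantially from the paper's. The paper subtracts the two principal integral equations \eqref{eqq} at \emph{different} base points --- $x=s(t)$ for $h$ and $x=\w s(t)$ for $\w h$ --- producing four cross-terms $I_1,\dots,I_4$. The crucial one is $I_2=\int_0^t[N(\w s(t),\w s(\tau);t,\tau)-N(s(t),s(\tau);t,\tau)]\,\dot{\w s}(\tau)\,d\tau$: because \emph{both} spatial arguments of $N$ lie on the free boundaries, the Lipschitz bound $|s(t)-s(\tau)|\leq M(t-\tau)$ cancels the apparent $(t-\tau)^{-1}$ singularity exactly, and all four $I_i$ are controlled by $C\|s-\w s\|_{W^{1,\infty}}$ with exponent~$1$. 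The maximum principle for the heat equation then propagates the bound from $x=s(t)$ to all $x\geq s(t)$. Your approach instead extends the Green's representation to the exterior region first, subtracts at a \emph{single} fixed point $x\geq s(t)$ (so the initial-data terms cancel and the extra contribution $\bar{\w u}(x,t)$ is handled via \eqref{ineq1}), and treats the resulting term $N(x,s(\tau);t,\tau)-N(x,\w s(\tau);t,\tau)$ by a time-layer splitting --- legitimately, since with $x$ fixed the Lipschitz cancellation is unavailable. The trade-off: you avoid the maximum principle and obtain a direct, elementary argument valid at every $x\geq s(t)$, at the cost of the weaker exponent~$1/4$; the paper's variable-base-point subtraction is shorter and in fact proves more than the lemma states. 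One minor remark: your claim that ``no $\varepsilon$-regularization is needed'' for $x\geq s(t)$ is slightly loose at the endpoint $x=s(t)$, where $N(s(t),s(\tau);t,\tau)\sim(t-\tau)^{-1/2}$ as $\tau\uparrow t$; but the identity there is exactly \eqref{eqq}, so nothing is lost.
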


\begin{proof} 
In the following proof $C$ stands for a generic  constant strictly larger  than zero that only depends 
on  $u_0$, $b$, $T$, $H$,  and  $M$.\\

We deduce from Remark \ref{sbounds}, that there exists $s_\infty>0$ depending only on $b, M$ and $T$, such that 
\bea
\| s\|_{W^{1, \infty}}, \; \| \w s\|_{W^{1, \infty}} \leq  s_\infty.
\eea
Without loss of generality we can assume that $s_\infty \geq 1$.\\

Since $\w u$ is  the unique  solution  to  the system  (\ref{1.16})-(\ref{1.21}),  associated to  $\w s$, $\tilde h,$ it
also satisfies \eqref{eqq} with  $\w h$ and $\w s $ substituting respectively $h$ and $s$  on the right side. 
Taking the difference between the two  linear equations  \eqref{eqq} related  respectively to  $u$ and $\w u$,  gives 

\begin{align*}
&  \intt N(s(t),0;t,\t)[h(\t)- \w h(\t)]d\t =   \intt [N(s(t),0;t,\t)- N(\w s(t),0;t,\t)]\w h(\t) d\t \\
& - \intt [N(\w s(t),\w s(\t);t,\t)- N(s(t),s(\t);t,\t)] \dot {\w s}(\t) d\t - \intt N(s(t),s(\t);t,\t)[\dot {\w s}(\t)-\dot s(\t)] d\t \\ 
&- \int^b_0 [N(s(t),\xi;t,0)-N(\w s(t),\xi;t,0)] u_0(\xi) d\xi  =  \sum_{i=1}^4 I_i,
\end{align*}
for all  $ t\in (0, T)$.\\

Now, we shall estimate each of the integrals  $I_i, i=1,\cdots, 4,$ in terms of the difference between 
$s$ and $\w s$.\\ 

Let $\ep$ be a fixed  constant  satisfying $0<\ep<t < T$, and set 
\begin{align*}
I_1^\ep = \int_{0}^{t-\ep } [N(s(t),0;t,\t)- N(\w s(t),0;t,\t)]\w h(\t) d\t. 
\end{align*} 

From the mean value Theorem, we deduce 
\begin{equation}\label{exponentiel}
\abs{e^{-a }-e^{-b} } \leq e^{-\min(a,b) } \abs{a-b}, \quad  a, b\geq 0.
\end{equation}
Recalling that $s(t), \w s(t) > b$ for all $t \in [0, T]$, and applying  the inequality \eqref{exponentiel} to 
the integrand of $I_1^\ep$, we obtain 

\begin{align}
\abs{I_{1}^\ep} &= \abs{ \int_{0}^{t-\ep } \frac{1}{2\sqrt{\pi(t-\tau)}}\b[ 2\exp\bigg (\frac{-s(t)^2}{4(t-\tau)} \bigg)  -2\exp\bigg (\frac{-\w s(t)^2}{4(t-\tau)} \bigg)\b] \w h(\t)  d\tau} \no \\
&\leq \int_{0}^{t-\ep } \frac{e^{\frac{-b^2}{4(t-\tau)}}}{\sqrt{\pi(t-\tau)}} \abs{\dfrac{s^2(t)- \tilde{s}^2(t)}{4(t-\tau)}} \abs{\tilde{h}(\tau)} d\tau \no \\
&\leq \int_{0}^{t-\ep} \frac{e^{\frac{-b^2}{4(t-\tau)}}}{2\sqrt{\pi} (t-\tau)^{3/2} }d\tau  \max{ \big( \norm{s}_{\infty},\norm{\tilde{s}}_{\infty} \big) } \norm{\tilde{h}}_{\infty} \norm{s-\tilde{s}}_{\infty}  \no \\
&\leq \int_{0}^{T} \frac{e^{\frac{-b^2}{4r}}}{2\sqrt{\pi} (r)^{3/2} }dr  \max{ \big( \norm{s}_{\infty},\norm{\tilde{s}}_{\infty} \big) } \norm{\tilde{h}}_{\infty} \norm{s-\tilde{s}}_{\infty}  \no 
\end{align}
Hence
\bean \label{III1}
\abs{I_{1}} \leq C \norm{s-\tilde{s}}_{\infty}\label{I21}.
\eean
Now, let  
\begin{align}
I_2^\ep  = \int_{0}^{t-\ep } [N(\w s(t),\w s(\t);t,\t)- N(s(t),s(\t);t,\t)] \dot {\w s}(\t) d\t, \no
\end{align}
or equivalently 
\begin{align}
I_{2}^\ep
&= \int_{0}^{t-\ep } \frac{1}{2\sqrt{\pi(t-\tau)}}\b[ \exp\bigg (\frac{-(\w s(t)-\w s(\t))^2}{4(t-\tau)} \bigg)\nonumber 
-\exp \b( \frac{-(s(t)- s(\t))^2}{4(t-\tau)} \b) \\
&+ \exp\bigg(\frac{-(\w s(t)+\w s(\t))^2}{4(t-\tau)}\b) 
 - \exp\bigg(\frac{-( s(t)+ s(\t))^2}{4(t-\tau)} \b) \b]  \dot {\w s}(\t) d\t.  \nonumber
\end{align}
For $0 <\tau < t-\ep $, using (\ref{exponentiel}), gives 
\begin{align}
&\abs{\exp\bigg (\frac{-(\w s(t)-\w s(\t))^2}{4(t-\tau)} \bigg)- \exp\bigg(\frac{-( s(t)- s(\t))^2}{4(t-\tau)}\b)}
 \leq \frac{1}{4(t-\t)} \abs{(\w s(t)-\w s(\t))^2 -( s(t)- s(\t))^2} \no \\
& \leq  \frac{1}{4(t-\t)} \abs{(\w s(t)-\w s(\t))-(s(t)-s(\t))} \abs{(\w s(t)-\w s(\t))+(s(t)-s(\t))}.  \no 
\end{align}

Since  $s, \w s$ are Lipschitz functions, we have 
\bean\label{lipschitz}
\frac{\abs{s(t)- s(\tau)}}{t-\tau}, \; \frac{\abs{\w s(t)-\w s(\tau )}}{t-\tau} 
\leq \max(\norm{ s}_{C^{0,1}([0,T])}, \norm{ \w s}_{C^{0,1}([0,T])}) \leq C  \max(\norm{s}_{W^{1, \infty},} \norm{\w s}_{W^{1, \infty}}) .
\eean
Therefore
\begin{align*}
&\abs{\exp\bigg (\frac{-(\w s(t)-\w s(\t))^2}{4(t-\tau)} \bigg)- \exp\bigg(\frac{-( s(t)- s(\t))^2}{4(t-\tau)}\b)}
 \leq  C\max(\norm{s}_{W^{1, \infty},} \norm{\w s}_{W^{1, \infty}}) \norm{s-\w s}_\infty.
\end{align*}
Consequently 
\begin{align*}
\abs{I_{2}^\ep} \leq C \int_{0}^{t-\ep } \frac{1}{2\sqrt{\pi(t-\tau)}} d\t \max(\norm{s}_{W^{1, \infty},} \norm{\w s}_{W^{1, \infty}}) 
\norm{ \w s}_{W^{1, \infty}}\norm{s-\w s}_\infty  \leq C \parallel s-\w s \parallel_{\infty}, 
\end{align*}
which leads to 
\begin{align}
\abs{I_{2}} \leq C \parallel s-\w s \parallel_{\infty}, \label{I31}
\end{align}

On the other hand, we have 
\begin{align}
|I_3|&= \b|  \intt N(s(t),s(\t);t,\t)[\dot {\w s}(\t)-\dot s(\t)] d\t\b| \no\\
&=\b|\intt \frac{1}{2\sqrt{\pi(t-\tau)}}\b[ \exp\bigg (\frac{-(s(t)-s(\t))^2}{4(t-\tau)} \bigg) + 
\exp\bigg(\frac{-(s(t)+s(\t))^2}{4(t-\tau)} \bigg)\b]\b(\dot{\w s}(\t)-\dot s(\t) \b) d\t \b| \no \\
&\leq  \intt \frac{1}{\sqrt{\pi(t-\tau)}} \abs{\dot{\w s}(\t)- \dot s(\t)} d\tau  
\leq \intt \frac{1}{\sqrt{\pi(t-\tau)}} d\tau \parallel \dot{\w s}- \dot s\parallel_{\infty} \leq \frac{2}{\sqrt{\pi}} \sqrt{T} \parallel \dot{\w s}- \dot s\parallel_{\infty} \no \\
&\leq C \parallel s- \w s\parallel_{W^{1, \infty}}. \label{I4}
\end{align}
Next, we estimate $I_4$. 
\begin{align*}
|I_4|&= \b |\int^b_0 [N(s(t),\xi;t,0)-N(\w s(t),\xi;t,0)] u_0(\xi) d\xi \b| \no \\
&=\b | \int^b_0 \frac{1}{2\sqrt{\pi t}}\b[ \exp\bigg (\frac{-(s(t)-\xi)^2}{4t} \bigg) + \exp\bigg(\frac{-(s(t)+\xi)^2}{4t} \b)\\
&- \exp\bigg (\frac{-(\w s(t)-\xi)^2}{4t} \bigg) \no 
-\exp\bigg(\frac{-(\w s(t)+\xi)^2}{4t} \b)\b]u_0(\xi) d\xi \b| \no \\
&\leq \int^b_0 \frac{1}{2\sqrt{\pi t}} \abs{\exp\bigg (\frac{-(s(t)-\xi)^2}{4t} \bigg)-\exp\bigg (\frac{-(\w s(t)-\xi)^2}{4t} \bigg)} \abs{u_0(\xi)} d\xi \\
&+\int^b_0 \frac{1}{2\sqrt{\pi t}} \abs{\exp\bigg(\frac{-(s(t)+\xi)^2}{4t} \b)- \exp\bigg(\frac{-(\w s(t)+\xi)^2}{4t} \bigg) }  \abs{u_0(\xi)} d\xi.
\end{align*}
Since $s(t)>0$, applying inequality  \eqref{exponentiel} to the estimate of $I_4$, yields 
\bea 
|I_4| &\leq& \frac{1}{8\sqrt{\pi}} \int^b_0  \left( \abs{(s(t)+\xi)^2 - (\w s(t)+\xi)^2 } +\abs{(s(t)-\xi)^2 - (\w s(t)-\xi)^2 }
 \right)\frac{e^{-\frac{\xi^2}{t}}}{t^{3/2}} \abs{u_0(\xi)} d\xi \\
&\leq& \frac{1}{8\sqrt{\pi}} \int^b_0 \abs{s(t)-\w s(t)} \left( \abs{s(t)+\w s(t)-2\xi}+ \abs{s(t)+\w s(t)-2\xi}
\right)\frac{e^{-\frac{\xi^2}{t}}}{t^{3/2}} d\xi\Vert u_0\Vert_{\infty}\\
&\leq& \frac{1}{2\sqrt{\pi}} \int^b_0 \frac{e^{-\frac{\xi^2}{t}}}{ \sqrt t } d\xi
\frac{\abs{s(t)-\w s(t)}}{t} \max (\norm{s}_{\infty},\norm{\w s}_{\infty})  \Vert u_0\Vert_{\infty}\\
&\leq& \frac{1}{4} \frac{\abs{s(t)-\w s(t)}}{t} \max (\norm{s}_{\infty},\norm{\w s}_{\infty})  \Vert u_0\Vert_{\infty}.
\eea
Since $s(b)= \w s(b)$, we have 
\bea
\frac{\abs{s(t)-\w s(t)}}{t}\leq 2 \norm{ s- \w s}_{C^{0,1}([0,T])} \leq C   \norm{ s- \w s}_{W^{1, \infty} }.
\eea
Therefore
\bean  \label{I5}
|I_4| \leq C \norm{ s- \w s}_{W^{1, \infty} }.
\eean

By combining \eqref{III1}, (\ref{I31}), (\ref{I4}) and (\ref{I5}), we finally obtain 

\begin{align}
|\sum_{i=1}^4 I_i | & \leq C \norm{s-\w s}_{W^{1, \infty}}.
\end{align}

By  the maximum principle  for the heat equation \cite{{cannon1967cauchy} }, we have
\begin{equation}\label{2.17}
\abs{\intt N(x,0;t,\t)[h(\t)- \w h(\t)]d\t } \leq C \norm{s-\w s}_{W^{1, \infty}},\quad x \geq s(t),   \quad  0 \leq t \leq T,
\end{equation}
which ends the proof of the lemma. 
\end{proof}


\begin{lemma}
\label{sss}
Under the  assumptions in Theorem \ref{main theorem}, there exists a constant
$C>0$  that only depends on   $u_0$, $b$, $T$, $H$,  and  $M$,
such that the following inequality 
\begin{equation}\label{eq000}
 \abs{\intt N(0,0;t,\t)[h(\t)- \w h(\t)]d\t } \leq \frac{C}{\ln\left(  \abs{\ln \norm{s-\w s}_{W^{1, \infty}}}\right)}.
\end{equation}
holds for all $t\in (0, T)$.
\end{lemma}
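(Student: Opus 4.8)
The plan is to view the left-hand side of \eqref{eq000} as the trace at $x=0$ of
\[
w(x,t):=\intt N(x,0;t,\t)\,[h(\t)-\w h(\t)]\,d\t ,
\]
which is legitimate because $N(0,0;t,\t)=\bigl(\pi(t-\t)\bigr)^{-1/2}$. Lemma~\ref{Lemma_eta} already controls $w$ away from the origin: writing $\eta:=\norm{s-\w s}_{W^{1,\infty}}$, one has $\abs{w(x,t)}\le C\eta^{1/4}$ for $x\ge s(t)$, and since $s(t)\le s_\infty$ this gives $\abs{w(x,t)}\le\ep:=C\eta^{1/4}$ for all $x\ge s_\infty$ and all $t\in(0,T)$. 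The remaining task is a quantitative unique continuation transporting this smallness from $\{x\ge s_\infty\}$ down to $x=0$, and I would carry it out for a holomorphic function of one variable rather than for the heat equation itself.

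Fix $t_0\in(0,T)$, put $g:=h-\w h$ (so $\norm{g}_\infty\le 2M$ since $\norm{h}_\infty,\norm{\w h}_\infty\le M$), and set
\[
\chi(y):=w(\sqrt y,t_0)=\frac{1}{\sqrt\pi}\int_0^{t_0} r^{-1/2}\,e^{-y/(4r)}\,g(t_0-r)\,dr .
\]
Since $\bigl|e^{-y/(4r)}\bigr|\le 1$ whenever $\re y\ge 0$, this defines a function holomorphic on $\{\re y>0\}$ and continuous on $\{\re y\ge 0\}$ (equivalently, $x\mapsto w(x,t_0)$ extends holomorphically to the sector $\{\,|\im x|<\re x\,\}$), satisfying there
\[
\abs{\chi(y)}\le C_0:=\tfrac{4M\sqrt T}{\sqrt\pi}\qquad\text{and}\qquad \abs{\chi(y)}\le C_0\,e^{-\re y/(4t_0)} ,
\]
while $\abs{\chi(y)}\le\ep$ on the real ray $[s_\infty^2,\infty)$ by the first paragraph, and $\chi(0)=w(0,t_0)$ is exactly the quantity to be bounded.

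I would then apply the two-constants theorem to $\chi$ on the slit half-plane $\Omega:=\{\re y>0\}\setminus[s_\infty^2,\infty)$, whose boundary consists of the imaginary axis (on which $\abs{\chi}\le C_0$) and the two banks of the slit (on which $\abs{\chi}\le\ep$); the a priori bound and exponential decay of $\chi$ reduce the Phragm\'en--Lindel\"of step needed on the unbounded domain $\Omega$ to a routine one. This gives, for $y\in\Omega$,
\[
\abs{\chi(y)}\le C_0^{\,1-\mu(y)}\ep^{\,\mu(y)}=C_0\Bigl(\tfrac{\ep}{C_0}\Bigr)^{\mu(y)},
\]
where $\mu(y)$ is the harmonic measure of the slit in $\Omega$ seen from $y$. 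The heart of the matter is a lower bound for $\mu(y)$ as $y\to 0^+$: the conformal map $\zeta=\sqrt{\,y-s_\infty^2\,}$ carries $\Omega$ onto a subdomain of the upper half-plane bounded by one branch of a hyperbola, sends the two banks of the slit to the real axis, and sends the interval $(0,s_\infty^2)$ to a vertical segment terminating at the boundary point $i\,s_\infty$; since $i\,s_\infty$ is a smooth boundary point lying at positive distance from the image of the slit and this distance is comparable to $y$ as $y\to0^+$, harmonic measure of the slit there is $\gtrsim y$, i.e. $\mu(y)\ge c\,y$ for $0<y\le y_0$, with $c>0$ and $y_0>0$ depending only on $s_\infty$, hence only on $b,T,H,M$. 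Back in the $x$ variable this reads $\abs{w(x,t_0)}\le C_0(\ep/C_0)^{c x^2}$ for $0<x\le\sqrt{y_0}$.

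Finally one passes to $x=0$: from the integral formula and $\bigl|1-e^{-x^2/(4r)}\bigr|\le\min\{1,x^2/(4r)\}$ one gets $\abs{w(0,t_0)-w(x,t_0)}\le C x$, hence $\abs{w(0,t_0)}\le C_0\,e^{-c x^2 L}+C x$ with $L:=\ln(C_0/\ep)$, valid for $0<x\le\sqrt{y_0}$. Optimizing in $x$ (take $x^2\asymp L^{-1}\ln L$, admissible once $\eta$ is small — a restriction allowed by enlarging $C$, since otherwise the right-hand side of \eqref{eq000} already exceeds the trivial bound $2C_0$) yields $\abs{w(0,t_0)}\le C\sqrt{(\ln L)/L}$; and since $\ep=C\eta^{1/4}$ one has $L\ge\tfrac18\abs{\ln\eta}$ for small $\eta$ (the hypothesis $\eta<e^{-1}$ ensuring $\ln\abs{\ln\eta}>0$), while $\sqrt{(\ln L)/L}\le C/\ln L$ for large $L$, so $\abs{w(0,t_0)}\le C/\ln\!\bigl(\abs{\ln\eta}\bigr)$ uniformly in $t_0\in(0,T)$ — that is \eqref{eq000}. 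The one genuinely delicate point is the harmonic-measure lower bound $\mu(y)\gtrsim y$ as $x\downarrow0$ (equivalently, the rate at which the two-constants estimate deteriorates near $x=0$); everything else is routine bookkeeping with Gaussian kernels.
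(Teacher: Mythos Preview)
Your proof is correct and follows the same overall strategy as the paper: extend $w(\cdot,t)$ holomorphically in the spatial variable, invoke the two-constants theorem using the bound from Lemma~\ref{Lemma_eta} on $\{x\ge s_\infty\}$, then combine with a continuity estimate at $x=0$ and optimize. The implementation, however, differs in a way worth recording. The paper works directly in the $z$-variable on the bounded triangle $\{0<\re z<2s_\infty,\ |\im z|<\re z\}$ with slit $[s_\infty,2s_\infty]$, and bounds the harmonic measure of that slit from below by comparison with thin rectangles (citing \cite{AA}), obtaining only the exponentially weak lower bound $w_0(x)\ge \tfrac{C}{x}e^{-c/x}$; it then uses $|F(x,t)-F(0,t)|\le C\sqrt{x}$ and optimizes. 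Your substitution $y=x^2$ straightens the quarter-sector into a half-plane, so the harmonic-measure bound reduces to a Hopf-lemma statement at the smooth boundary point $y=0$ of the slit half-plane, yielding the much sharper $\mu(y)\ge cy$; you also obtain the tighter continuity bound $|w(0,t_0)-w(x,t_0)|\le Cx$. Consequently your optimization actually produces the stronger rate $C\sqrt{(\ln L)/L}$, which you then relax to $C/\ln L$ to match the statement; the paper's route, taken at face value, optimizes only to $(\ln L)^{-1/2}$. Two small points: the phrase ``this distance is comparable to $y$'' should refer to the distance from $\zeta(y)$ to the hyperbola (not from $is_\infty$ to the real axis); and the Phragm\'en--Lindel\"of passage on the unbounded slit half-plane, while indeed routine for bounded $\chi$, deserves one explicit sentence.
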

\begin{proof}
 Further $C>0$  denotes  a generic  constant  that only depends 
on  $u_0$, $b$, $T$, $H$,  and  $M$.\\

Remark \ref{sbounds} implies 
\bea
s(t), \, \w s(t) \leq s_\infty, \textrm{ for all  } t\in (0, T).
\eea

Let $\Omega$ be the triangle defined by $\Omega= \{ z \in \C,\,  0<\re(z) < 2s_\infty, \; |\im(z)| < |\re(z)| \}$. \\

For $z \in \Omega$, define  
\bean \label{Fz}
F(z,t)=\ds\intt N(z,0;t,\t)(h(\t) -\w h(\t))d\t  = \frac{1}{\sqrt{\pi}} \int^{t}_0 \frac{1}{\sqrt{t-\t}} 
\exp\bigg (\frac{-z^2}{4(t-\t)} \bigg)(h(\t) -\w h(\t)) d\t,
\eean
We remark that the function $ z \rightarrow F(z, t)$ is holomorphic in $ \Omega$, and satisfy
\bean \label{finfty}
|F(z, t)| \leq  F_\infty, \quad  z\in \Omega,
\eean 
where $ F_\infty = \frac{4M\sqrt{T}}{\sqrt \pi }.$\\

 Denote $w_0(z)$,    the harmonic measure of $ \Omega \setminus [s_\infty, 2s_\infty] \times\{0\}$. It is the unique 
 solution  to the system:
\bean \label{harmonic1}
\Delta w_0(z)&=&0 \quad z\in \Omega \setminus [s_\infty, 2s_\infty) \times\{0\},\\\label{harmonic2}
w_0(z) &=& 0 \quad z\in [s_\infty, 2s_\infty) \times\{0\},\\\label{harmonic3}
w_0(z) &=& 1 \quad z\in \partial \Omega.
\eean

  The 
 holomorphic unique continuation of the functions $ z \rightarrow F(z, t)$ using the Two constants Theorem \cite{Ne, BT}, gives
\[
|F(z, t)| \leq \left( F_\infty \right)^{1-w_0(z)} \left( \sup_{r \in [s_\infty, 2s_\infty] \times\{0\} }  |F_\ep(r, t)| \right)^{w_0(z)},
\; \; z\in \Omega \setminus [s_\infty, 2s_\infty] \times\{0\}.
\]

Combining the previous inequality with the estimate in  Lemma \ref{Lemma_eta}, yields 
\bean \label{fff}
|F(z, t)| \leq C 
\norm{s-\w s}_{W^{1, \infty}}^{\frac{w_0(z)}{4}},  \; \; z\in \Omega \setminus [s_\infty, 2s_\infty] \times\{0\}.
\eean
\begin{lemma} Let $w_0(z)$ be the unique solution to the system \eqref{harmonic1}, \eqref{harmonic2}, 
\eqref{harmonic3}. Then, there 
exist  constants  $c, C>0$ that  only depend  on $s_\infty$,
such that the following inequality 

\bean \label{lowerboundw}
w_0(x) \geq \frac{C}{x}e^{-\frac{c}{x}}, 
\eean
holds for all $x\in (0, s_\infty)$.

\end{lemma}
\begin{proof}
For $a\in (0, s_\infty)$, let  $\mathcal R_a  \subset \Omega $  be the rectangle defined by  $ \mathcal R_a = 
 \{ z \in \C,\,  \frac{a}{2}<\re(z) < 2s_\infty, \; |\im(z)| \leq \frac{a}{2} \},$ and denote  
$\w w_0(z, a) $  the harmonic measure of $ \mathcal  R_a   \setminus [s_\infty, 2s_\infty] \times \{0\}$. By Maximum 
principle it follows that  

\bean \label{ineqqq}
w_0(z) \geq  \w w_0(z, a), 
\eean
for all $z\in \mathcal R_a$. We deduce from Lemma 3.1 in \cite{AA} the following estimate
\bean \label{hhj}
\w w_0(a, a) \geq  \frac{C}{a}e^{-\frac{c}{a}}, 
\eean
holds for all $a\in (0, s_\infty)$, where $c, C>0$ only depend on $s_\infty$.\\

Taking $z=a$ at inequality \eqref{ineqqq}, and using \eqref{hhj}, we obtain  the wanted estimate.

\end{proof}

Applying the estimate \eqref{lowerboundw} to inequality \eqref{fff}, yields 
\bean \label{fff2}
|F(x, t)| \leq C 
\norm{s-\w s}_{W^{1, \infty}}^{  \frac{C}{x}e^{-\frac{c}{x}}},  \; \; x\in (0, s_\infty).
\eean

On the other hand, we have
\bean \label{ggg}
| F(x, t) - F(0, t) | &\leq  \frac{1}{\sqrt{\pi}}  \int_{0}^t \frac{1}{\sqrt{t-\t}}\left(1- \exp (\frac{-x^2}{4(t-\t) }) \right) d\t \leq 
   \frac{4M}{\sqrt{\pi}} \int_{0}^T\frac{1}{(t-\t)^{\frac{3}{4}}}  d\t  \sqrt x  \leq C \sqrt x, \label{eee}
\eean
for all  $t\in (0, T)$. \\

We deduce from inequalities \eqref{fff2} and \eqref{ggg}, the following estimate  
\bean \label{rrrr}
|F(0, t)| \leq C \left(
\norm{s-\w s}_{W^{1, \infty}}^{\frac{C}{x}e^{-\frac{c}{x}}}+ \sqrt {x}\right),
\eean
for all $x$ in $(0, s_\infty)$ and $t\in (0, T)$. \\

 Finally,  minimizing for fixed $t\in (0, T)$, the right hand side in \eqref{rrrr}  with respect to $x  \in (0, s_\infty)$, we  obtain
 \bea
|F(0, t)| \leq \frac{C}{\ln\left(  \abs{\ln \norm{s-\w s}_{W^{1, \infty}}}\right)},
 \eea 
 for all  $t\in (0, T)$,  which achieves the proof of the Lemma.

\end{proof}

We are now ready  to prove Theorem (\ref{main theorem}).
\begin{proof}
Let 
\bea
\mathfrak H (t) = h(t) - \w h(t). 
\eea
Since $F(0, t) \in L^\infty((0, T))$, according to Theorem 8.1.1 in \cite{cannon1984one}, \cite{Br},  the Abel integral  
equation of first kind
\begin{equation}\label{Fot}
F(0,t)=\frac{1}{\sqrt{\pi}} \intt\frac{\mathfrak H(\t)}{\sqrt{t-\t}} d\t, 
\end{equation}
admits a unique solution,  $\mathfrak H(t) \in  L^1((0, T))$, given by
\begin{align} \label{abelinversion}
\mathfrak H(t) =  \frac{1}{\sqrt{\pi}} \frac{d}{dt} \intt \frac{F(0,\t)}{\sqrt{t-\t}} d\t, \textrm{ for all } t\in (0, T).
\end{align}
In addition, we deduce from \eqref{Fot}, \eqref{eq000}, and \eqref{equiv}
\bea
 \|\mathfrak  H \|_{-\frac{1}{2}} \leq C \|F(0, \cdot)\|_{L^2} \leq \frac{C}{\ln\left(  \abs{\ln \norm{s-\w s}_{W^{1, \infty}}}\right)},
\eea
which achieves the proof of the main Theorem.
\end{proof}

\section{Numerical Analysis}
\subsection{The direct problem}
\subsubsection{Boundary immobilisation method} 
This method consists on fixing the moving interface and  mapping  the moving interval into a fixed one by using the spatial coordinate change \cite{Gol}
 \begin{equation}\label{13n}
  \xi=\frac{x}{s(t)}.
 \end{equation}  
 This approach  was applied to  many moving boundary problems \cite{FFL}.
Now, we reformulate the problem defined by (\ref{1.16}) and (\ref{1.21}) using the new coordinate system $(\xi,t)$ in the fixed 
domain $[0,1]\times [0,T]$. We get

 \begin{eqnarray}
\left\{ \begin{array}{rll}
  \displaystyle \frac{\partial u}{\partial t} &= \displaystyle \frac{\xi}{s} \frac{ds}{dt}\frac{\partial u}{\partial \xi}+\frac{1}{s^2} \displaystyle \frac{\partial^2 u}{\partial \xi^2},\quad &0<\xi<1,\quad t>0 \label{14n} \\
\displaystyle \frac{\partial u}{\partial \xi}(0,t) &=-s(t)  h(t),\quad &t>0, \\
 u(1,t)&=0, \quad &t>0 \\
 \displaystyle \frac{ds}{dt} &=-\displaystyle \frac{1}{s} \displaystyle \frac{\partial u}{\partial \xi}, \quad &t>0 \\
 u(\xi,0)&= u_0(b\xi), \quad &0<\xi<1 \\
  s(0)&=b. \quad  &t=0 
\end{array}\right.
\end{eqnarray}
We subdivide the spatial interval $[0,1]$ into $N$ equal intervals, we set $h=\Delta \xi=\frac{1}{N}$ and $\xi=ih$. We also subdivide the time interval $[0,T]$ into $M$ equal intervals, and we set $t_m=mk$,  $k=\Delta t =\displaystyle \frac{1}{M}$ and $r=\displaystyle \frac{k}{h^2}$. 
Next, we  use an explicit Euler method in time  and a finite differences method in space, to obtain 
\begin{equation}\label{18n}
 U_i^{m+1}=U_i^m+ \displaystyle \frac{rh\xi_i\dot s_m}{2s_m}(U_{i+1}^m-U_{i-1}^m) + \displaystyle \frac{r}{s_m^2} (U_{i+1}^m-2U_i^m + U_{i-1}^m)
\end{equation} 

 Applying a central differences at the boundary conditions (\ref{1.17}), we get
\begin{equation}\label{19m}
 U_{i+1}^m=U_{i-1}^m+2h \cdot h(t_m).
\end{equation}
 Taking $i=0$ in the equations (\ref{18n}) and (\ref{19m}), we obtain
\begin{equation}
\left\lbrace 
\begin{array}{l}
 U_{-1}^m=U_{1}^m+2h \cdot h(t_m) \\ \\
 U_0^{m+1}=U_0^m+\displaystyle \frac{rh\xi_0 \dot s_m}{2s_m}(U_{1}^m- U_{-1}^m) +\displaystyle \frac{r}{s_m^2}(U_{1}^m-2U_0^m+ U_{-1}^m) 
 \end{array}
\right.
\end{equation}
  Eliminating the term $U_{-1}^m$ in the  equations above, leads to
$$U_0^{m+1}=(1-\displaystyle \frac{2r}{s_m^2})U_0^m+ \displaystyle \frac{2r}{s_m^2} U_1^m+(\displaystyle \frac{2hr}{s_m} -k\xi_0\dot s_m)h(t_m).$$
Finally, applying a change of variable $Z(t)=s^2(t)$, yields  
\begin{equation}
\left\lbrace 
\begin{array}{l}
 U_1^{m+1}=(1-\displaystyle \frac{2r}{Z_m})U_1^m+\displaystyle \frac{2r}{Z_m}U(2,m)+\displaystyle \frac{4hr-k\xi_1 \dot Z_m}{2\sqrt {Z_m}}h(t_m); \ \ m=1,...,M \\ \\ 
    U_{N+1}^{m+1}=0; \ \ m=1,...,M \\ \\ 
 U_i^{m+1}=U_i^m+ \displaystyle \frac{rh\xi_i \dot Z_m}{4Z_m} (U_{i+1}^m-U_{i-1}^m)+
\displaystyle \frac{r}{Z_m}(U_{i+1}^m-2U_i^m + U_{i-1}^m); \ \ i= 2,...,N \\ \\
    Z_{m+1}=Z_m-\displaystyle \frac{k}{h}(3U_{N+1}^m-4U_{N}^m+U_{N-1}^m);  \ \ m=1,...,M  \\ \\ 
    \dot Z_{m+1}=\displaystyle \frac{Z_{m+1}-Z_m}{dt} \ \ m=1,...,M
 \end{array}
\right.
\end{equation}  
For numerical stability requirements, we apply  Von-Neumann approach \cite{smith1985numerical} to obtain  bounds on the size of the time step $k$
\begin{equation*}
k\leq h^2 \w Z, \quad \text{ where } \w Z=\underset{0<t<T}{\inf} Z(t).
\end{equation*}

\subsubsection{Numerical results} 

We consider the Stefan problem (\ref{1.16})-(\ref{1.21}) with initial and boundary conditions for which we know the exact solution in order to measure the performance of the numerical method mentioned above. Therefore, we take 
\begin{equation}
[0,T]=[0,1], \quad h(t)=exp(t), \quad u_0(x)= exp(-x)-1 , \quad b=0.1 
\end{equation}
The associated exact solution is 
\begin{equation}
\left\lbrace 
\begin{array}{l}
u(x,t)=exp(t-x)-1\\
s(t)=t.
 \end{array}
\right.
\end{equation}
The tables below provide the results obtained by the Boundary Immobilisation Method with a comparison
 between the exact solution and the approximate one at a fixed time $ t_m $.

 \textbf{Table 1} \\
Values of the temperature distribution as predicted by the numerical and exact solutions at a fixed time,
where the error is defined by
\begin{equation}
\| e\|_u= \frac{1}{N} \sum_{i=0}^{N-1} \bigg|1-\frac{U_i^m}{u(x_i,t_m) }\bigg|.
\end{equation}

  \begin{center}
\begin{tabular}{lllllll}
\hline 
$\xi$ & Numerical &solution & \ \ & \ \ & exact & \ \   \\ 
\hline 
\ \ & N=10 & N=20 & N=40 & N=80 & solution & Error \\
\hline 
0.1 & 0.094193 & 0.094187 & 0.094177 & 0.094175 & 0.094175 & 0.11326$\times 10^{-3}$\\ 
0.2 & 0.083340 & 0.083299 & 0.083289 & 0.083287 & 0.083288 & 0.26014$\times 10^{-3}$\\
0.3 & 0.072554 & 0.072518 & 0.072510 & 0.072508 & 0.072509 & 0.25743$\times 10^{-3}$\\
0.4 & 0.061875 & 0.061845 & 0.061838 & 0.061837 & 0.061837 & 0.25335$\times 10^{-3}$\\
0.5 & 0.051303 & 0.051278 & 0.051272 & 0.051271 & 0.051271 & 0.26013$\times 10^{-3}$\\
0.6 & 0.040836 & 0.040816 & 0.040812 & 0.040811 & 0.040811 & 0.25318 $\times 10^{-3}$\\
0.7 & 0.030473 & 0.030458 & 0.030455 & 0.030454 & 0.030455 & 0.24079 $\times 10^{-3}$\\
0.8 & 0.020213 & 0.020204 & 0.020202 & 0.020201 & 0.020201 & 0.26401 $\times 10^{-3}$\\
0.9 & 0.010056 & 0.010051 & 0.010050 & 0.010050 & 0.010050 & 0.23217$\times 10^{-3}$\\
1 & 0 & 0 & 0 & 0 & 0 & 0\\ 
\hline 
  \end{tabular}
  \end{center}

\textbf{Table 2} \\
 Values of the location of the moving interface as predicted by the numerical  and exact solutions at a fixed time,
where the relative errors of  $s$ and $\dot s$ are respectively defined by 
 \begin{align*}
 e_{s}^m &=\bigg| 1-\frac{S_m}{s(t_m)} \bigg|, \\
   e_{\dot s}^m &=\bigg| 1-\frac{\dot S_m}{\dot s(t_m)} \bigg|. 
\end{align*}  

\begin{center}
\begin{tabular}{lllll}
\hline 
N & $S_m$ & Error & $\dot S_m$ & Error\\ 
\hline 
10 & 0.10000080 & 0.61673$\times 10^{-3}$& 0.99934974 & 0.65025$\times 10^{-3}$ \\
20 & 0.10001384 & 0.13847 $\times 10^{-3}$ & 0.99985316 & 0.14683$\times 10^{-3}$ \\
40 & 0.10000328 & 0.32871$\times 10^{-4}$ & 0.99996504 & 0.34958$\times 10^{-4}$\\
80 & 0.10000080 & 0.80114$\times 10^{-5}$ & 0.99999146 & 0.85327 $\times 10^{-5}$ \\
\hline 
 Exact solution  & 0.1 & \ \  \ \ & 1 \\
\hline 
\end{tabular}
\end{center}

It is clearly observed that all numerical predictions are in good agreement with the exact solution and that our numerical schemes are convergent.

\subsection{The inverse problem}
\subsubsection{Linear integral equations}
Since $u(s(t),t)=0$, using the equation (\ref{uxt}), we have 
\begin{equation}\label{ust}
 \intt N(s(t),0;t,\t)h(\t) d\t =\intt N(s(t),s(\t);t,\t) \dot s(\t) d\t - \int^b_0 N(s(t),\xi;t,0)u_0(\xi) d\xi,
\end{equation} 
We consider a uniform grid of the temporal interval $[0,T]$, with a time step $\Delta t= \frac{T}{N}$, that is,  $t_j=j\Delta t, \  j=0,...,N$. We also
use a uniform grid of spatial interval $[0,b]$ with a spatial step $\Delta \xi =\frac{b}{M}$, that is,  $\xi_i= i \Delta \xi, \  i=0,...,M$. Then the equation (\ref{ust}) becomes
\begin{align*}
\sum_{j=0}^{N-1}\int_{t_j}^{t_{j+1}} N(s(t_i),0;t_i,\t)h(\t) d\t = \sum_{j=0}^{N-1} \int_{t_j}^{t_{j+1}} N(s(t_i),s(\t);t_i,\t) \dot s(\t) d\t - \sum_{k=0}^{M-1}\int^{\xi_{k+1}}_{\xi_k} N(s(t_i),\xi;t_i,0)u_0(\xi) d\xi. 
\end{align*} 
For example if we use a quadrature formula on one point, we get for $i=1,...,N$ 
\begin{align}\label{sum_of_N}
\sum_{j=0}^{N-1} N(s(t_i),0;t_i,\t_j)h(\t_j) \Delta t = \sum_{j=0}^{N-1}  N(s(t_i),s(\t_j);t_i,\t_j) \dot s(\t_j) \Delta t - \sum_{k=0}^{M-1} N(s(t_i),\xi_k;t_i,0)u_0(\xi_k) \Delta \xi, 
\end{align} 
where $\t_j \in [t_j,t_{j+1}]$ for $j=1,...,N-1$ and $\xi_k \in [ \xi_k,\xi_{k+1}]$ for $k=1,...,M-1$. \\

This system can be represented by  the following algebraic linear system
\begin{equation}\label{Ahg}
\mathcal A h=g, 
\end{equation}
where $\mathcal A$ denotes a matrix depending on the quadrature formulas,
$h:=(h_j)=(h(\t_j))$ denotes the vector of unknown Neumann condition of the inverse problem 
(\ref{Inverse problem}), and $g:=(g_i)$ is the vector representing the right hand side of the equation 
(\ref{sum_of_N}).\\

Since the obtained system corresponds to an ill-posed inverse problem, the matrix $\mathcal A$ is
 ill-conditioned, and the system need to be regularized.
We consider here the Tikhonov regularization method, which solves the modified system of equation 
\begin{equation}\label{Tikhonov}
(\mathcal A^{tr} \mathcal A+\lambda I)h= \mathcal A^{tr} g+\lambda h_{exact},
\end{equation}
where the superscript $^{tr}$ denotes the transpose of a matrix, $I$ the identity matrix, and $\lambda > 0$
 is the small regularization parameter.

\subsubsection{Numerical results}
\paragraph{ {\bf Example 1}}
The first example has a moving boundary given by the linear function
\begin{equation}
s(t)=\sqrt{2}-1+\frac{t}{\sqrt{2}}, \ t\in[0,T].
\end{equation}
We take the exact solution given by 
\begin{equation}
u(x,t)=-1+\exp \big( 1-\frac{1}{\sqrt{2}} +\frac{t}{2} -\frac{x}{\sqrt{2}} \big) , \ \ [x,t]\in [0,s(t)]\times [0,1].
\end{equation}
Therefore, this example has the following initial and boundary conditions
\begin{align}
b&=s(0)=\sqrt{2}-1, \\
u_0(x)&= -1+\exp \big( 1-\frac{1}{\sqrt{2}} -\frac{x}{\sqrt{2}} \big), x\in[0,b], \\ 
u(s(t),t)&=0, \  t\in(0,1], \\
u_x(s(t),t)&= \dot s(t)=-\frac{1}{\sqrt{2}}, \ t\in(0,1].
\end{align}
In this example we wish to recover the Neumann boundary condition along the fixed boundary $x = 0$ given by 
\begin{equation}
h(t)=\frac{1}{\sqrt{2}} \exp\big ( 1-\frac{1}{\sqrt{2}} +\frac{t}{2} \big), \ t\in[0,1].
\end{equation}

This example has been previously tested in \cite{johansson2011method} using the method of fundamental solutions.

In a first time, we use Gauss-Legendre formulas in order to 
\begin{figure}[H]
\centering
\includegraphics[width=12.5cm,height=7.5cm]{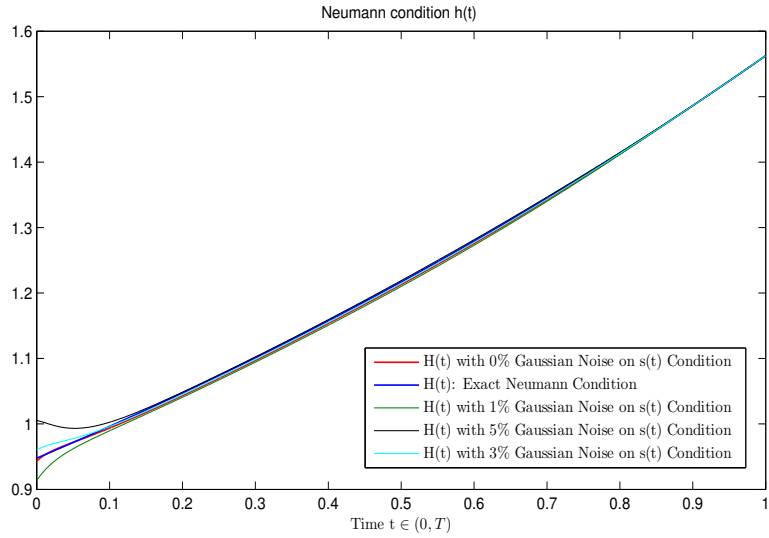} 
\caption{The exact solution $-u_x(0, t)$ and approximate solution with different Gaussian noise levels obtained 
with $\lambda=10^{-3}$ and $N=1000$ using Gauss-Legendre Formula.}
\label{HGauss}
\end{figure}

\begin{table}[H]
\begin{tabular}{|c|c|c|}
\hline
 $\lambda$ & Noise on $s(t)$ ($\%$) &$\dfrac{\ds\norm{h_{exact} - h}_2}{\ds\norm{h_{exact}}_2}$ \\
\hline
 0.001 & 0 $\%$ & 0.0025 \\
\hline
 0.001 & 1 $\%$ & 0.0056 \\
 \hline
 0.001 & 3 $\%$ & 0.0078\\
\hline
 0.001 & 5 $\%$ & 0.0124 \\
\hline
\end{tabular}
\caption{Relative errors for all cases.}
\label{tableGauss}
\end{table}

\begin{figure}[H]
\centering
\includegraphics[width=12.5cm,height=7.5cm]{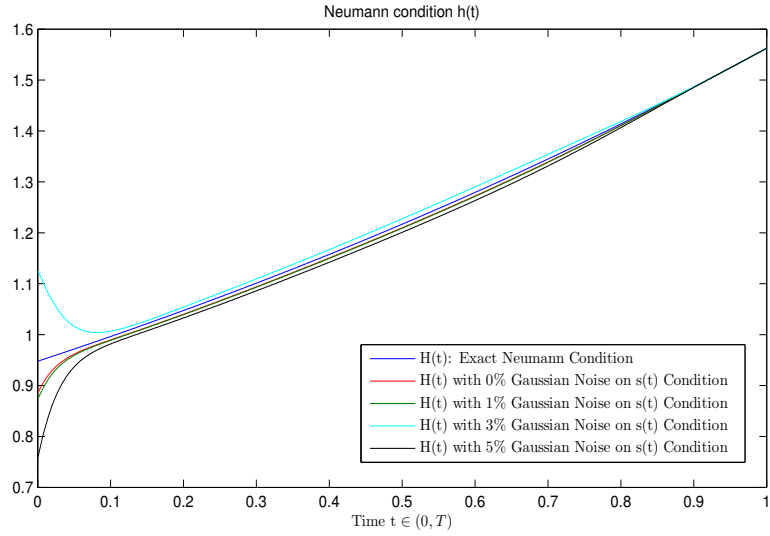}
\caption{The exact solution $-u_x(0, t)$ and approximate solution with different Gaussian noise levels obtained with $\lambda=10^{-3}$ and $N=1000$ using mid-point formula.}
\label{Hpoint_milieu}
\end{figure}

\begin{table}[H]
\begin{tabular}{|c|c|c|}
\hline
 $\lambda$ & Noise on $s(t)$ ($\%$) &$\dfrac{\ds\norm{h_{exact} - h}_2}{\ds\norm{h_{exact}}_2}$ \\
\hline
 0.001 & 0 $\%$ & 0.0073 \\
\hline
 0.001 & 1 $\%$ & 0.0086 \\
 \hline
 0.001 & 3 $\%$ & 0.0209 \\
\hline
 0.001 & 5 $\%$ & 0.0211 \\
\hline
\end{tabular}
\caption{Relative errors for all cases.}
\label{table_point_milieu}
\end{table}

\begin{figure}[H]
\centering
\includegraphics[width=12.5cm,height=7.5cm]{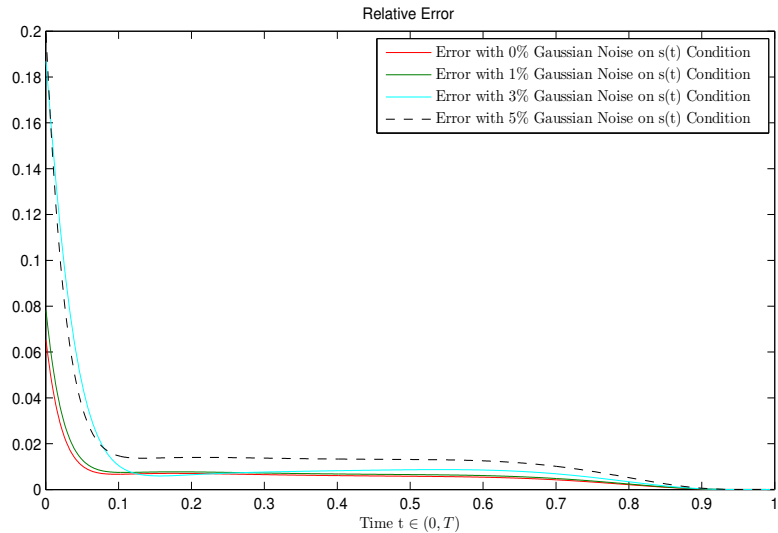}
\caption{The relative error with different Gaussian noise levels obtained with $\lambda=10^{-3}$, $N=1000$.}
\label{error_point_milieu}
\end{figure}

In Figures (\ref{HGauss}) and (\ref{Hpoint_milieu}), we present the exact and approximate solution using the linear equation (\ref{Tikhonov}). 
In order to test the stability of our inverse problem, we add a different level of gaussian noise to the data $s(t)$.  The Tables (\ref{table_point_milieu})-(\ref{tableGauss}) show that the accuracy is altered with noise and the relative error on the numerical solution 
is under $0.03$.  \\

The Figures (\ref{Hpoint_milieu})-(\ref{error_point_milieu}) and the table (\ref{table_point_milieu}) are obtained by using mid-point rule.  
In  Figure (\ref{HGauss}), we refined the error on the integration and use Gauss-Legendre formulas of the third order. 
We remark that we have a better stability around $t=0$. \\

In  Figure (\ref{error_point_milieu}), we present the evolution  of the relative errors with different gaussian noise levels.  
We remark that the numerical solutions are stable  far away from $0$, and they become more accurate as the amount of noise decreases.
The numerical results confirms the theoretical predictions of the Theorem (\ref{main theorem}) which relates to the stability of the Neumann condition $h$ with respect  to the free boundary $s$.


\paragraph{{\bf Example 2}}
In this example the moving boundary is given by
\begin{equation}
s(t)=t+b, \ t\in[0,1].
\end{equation}
We take the exact solution given by 
\begin{equation}
u(x,t)=\exp \big( t-x+b \big) -1 , \ \ [x,t]\in [0,s(t)]\times [0,1].
\end{equation}
Therefore, this example has the following initial and boundary conditions
\begin{align}
b&=s(0), \\
u_0(x)&= \exp \big( b-x \big) -1 , x\in[0,b], \\ 
u(s(t),t)&=0, \  t\in(0,1], \\
u_x(s(t),t)&= \dot s(t)=1, \ t\in(0,1].
\end{align}
We aim to recover the Neumann boundary condition at the fixed boundary $x = 0$ given by 
\begin{equation}
h(t)= \exp\big ( t+b \big), \ t\in[0,1].
\end{equation}

\begin{figure}[H]
\centering
\includegraphics[width=11.5cm,height=7.5cm]{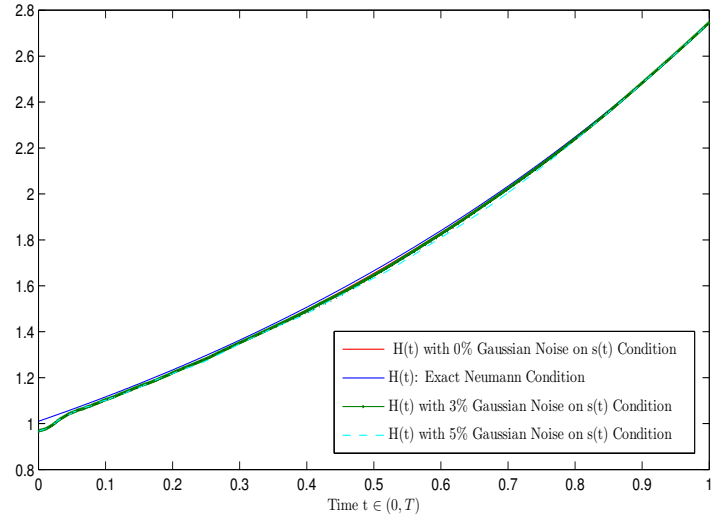}
\caption{The exact solution $-u_x(0, t)$ and approximate solution with gaussian noise obtained with $\lambda=1e-2$, $N=1000$.}
\label{xxx}
\end{figure}

\begin{table}[H]
\centering
\begin{tabular}{|c|c|c|}
\hline
 $\lambda$ & Noise on $s(t)$ ($\%$) &$\dfrac{\ds\norm{h_{exact} - h}_2}{\ds\norm{h_{exact}}_2}$ \\
\hline
 0.01 & 0 $\%$ & 0.0066 \\
\hline
 0.01 & 1 $\%$ & 0.0068 \\
 \hline
 0.01 & 3 $\%$ & 0.0080 \\
\hline
 0.01 & 5 $\%$ & 0.0102 \\
\hline
\end{tabular}
\caption{Relative errors for all cases.}
\label{Table_relative_error}
\end{table}

\begin{figure}[H]
\centering
\includegraphics[width=11.5cm,height=7.5cm]{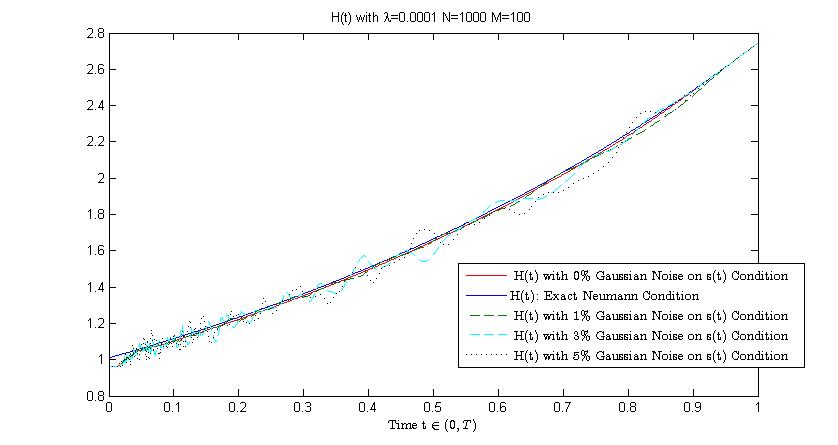}
\caption{The exact solution $-u_x(0, t)$ and the approximate one with different Gaussian noise levels obtained with $\lambda=10^{-4}$, $N=1000$.}
\label{yyy}
\end{figure}

\begin{table}[H]
\centering
\begin{tabular}{|c|c|c|}
\hline
 $\lambda$ & Noise on $s(t)$ ($\%$) &$\dfrac{\ds\norm{h_{exact} - h}_2}{\ds\norm{h_{exact}}_2}$ \\
\hline
 0.0001 & 0 $\%$ & 0.0081 \\
\hline
 0.0001 & 1 $\%$ & 0.0094 \\
 \hline
 0.0001 & 3 $\%$ & 0.0189 \\
\hline
 0.0001 & 5 $\%$ & 0.0292 \\
\hline
\end{tabular}
\caption{Relative errors for all cases.}
\label{Table_relative_error}
\end{table}

The Tikhonov regularization parameter $\lambda$ is chosen arbitrarily and the results in the figures and tables above show that we obtain a better precision with $\lambda$ of the order $10^{-2}$ which underscore the need of  regularization (\ref{Tikhonov}).

\paragraph{ {\bf Example 3}}
In this example the moving boundary is given by the nonlinear function 
\begin{equation}
s(t)=\sqrt{t+\frac{1}{4}},  \ t\in[0,1].
\end{equation}
and has the initial condition
\begin{equation}
u_0(x)=\dfrac{exp(\frac{1}{4}) \sqrt{\pi}} {2}   \big( erf(\frac{1}{2}) - erf(x)\big), \ x\in [0,\frac{1}{2}], 
\end{equation} 
where $erf(x)$ is the error function given by 
$erf(x)= \frac{2}{\pi} \ds\int^x_0 exp(-t^2) \ dt$.
This example has the Neumann boundary condition 
$h(t)=  \ds\dfrac{exp(\frac{1}{4})}{2\sqrt{t+\frac{1}{4}}}$,
which is taken from \cite{Kn, andreucci2004lecture}. Here, we considered a Stefan problem with a 
free boundary function's regularity  that is beyond  the framework of the derived stability estimates.

\begin{figure}[H]
\centering
\includegraphics[width=11.5cm,height=7.5cm]{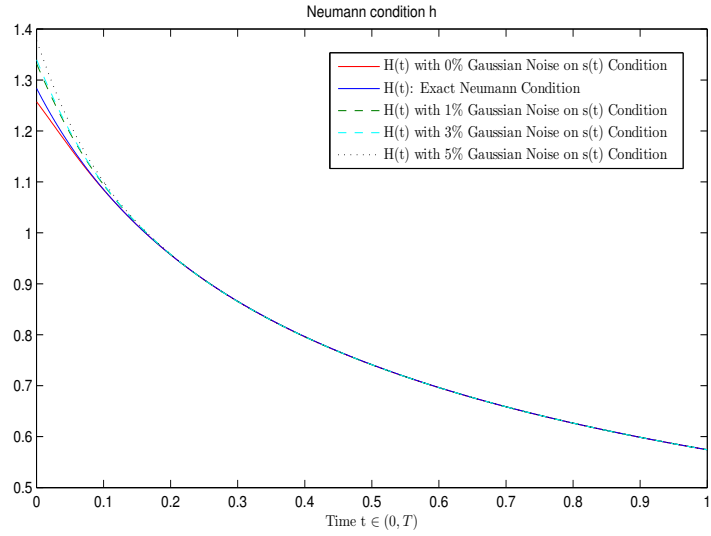}
\caption{The exact solution $-u_x(0, t)$ and approximate solution with different Gaussian noise levels obtained with $\lambda=10^{-3}$, $N=1000$.}
\label{HKnabner}
\end{figure}

\begin{figure}[H]
\centering
\includegraphics[width=11.5cm,height=7.5cm]{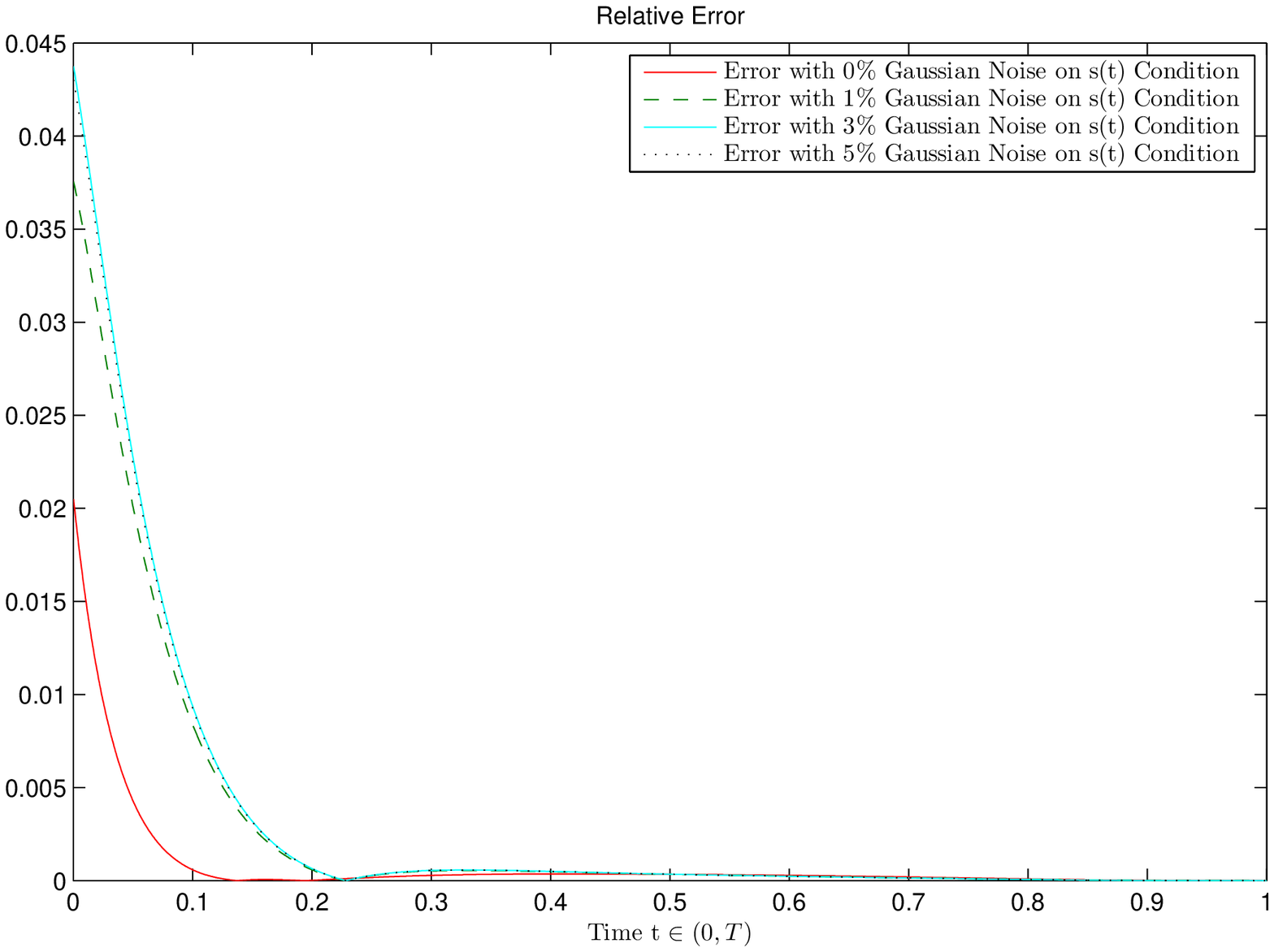}
\caption{The relative error with different Gaussian noise levels obtained with $\lambda=10^{-3}$, $N=1000$.}
\label{ErrorKnabner}
\end{figure}

\begin{table}[H]
\centering
\begin{tabular}{|c|c|c|}
\hline
 $\lambda$ & Noise on $s(t)$ ($\%$) &$\dfrac{\ds\norm{h_{exact} - h}_2}{\ds\norm{h_{exact}}_2}$ \\
\hline
 0.001 & 0 $\%$ & 0.0040 \\
\hline
 0.001 & 1 $\%$ & 0.0110 \\
 \hline
 0.001 & 3 $\%$ & 0.0126 \\
\hline
 0.001 & 5 $\%$ & 0.0124 \\
\hline
\end{tabular}
\caption{}{Relative errors for all cases.}
\label{Table_relative_error_Knanber}
\end{table}

In Figure (\ref{HKnabner}), we plot the Neumann condition $h$ for three different Gaussian noise levels $1$, $3$ and $5\%$ and when compared, they match well with the exact solution especially far from $t$ tending to $0$.

\begin{remark}
Notice  that in the three considered  examples  the explicit boundary influxes  are analytic functions of $t$. This explains the
 relatively good recovery of the boundary influx from the measurement of the free boundary far away
from $t=0$.
\end{remark}

\begin{remark}
 Recall that for a smooth right hand side $f$  the inverse of  the Abel integral operator is given by \cite{Br}
\bea
A_0^{-1} f(t) = \frac{1}{\sqrt \pi} \frac{f(0)}{\sqrt t} + A_0 f^\prime(t), \textrm{ for } t\in (0, T).
\eea
It seems that the regularization, and discretization   \eqref{ust}  of the Abel-like integral equation  \eqref{eqq}  produce a numerical 
weak singularity at the origin  that doesn't match the real regularity of  the boundary influx, and  
slightly increases the relative  error there. This numerical problem will be considered  in future
works.

\end{remark}


\end{document}